\newcommand{\vectornorm}[1]{\left|\left|#1\right|\right|}
\newcommand{\R}{\mathbb{R}}
\def\ds{\displaystyle}
\newtheorem{theorem}{Theorem}[section]
\newtheorem{lemma}[theorem]{Lemma}
\newtheorem{proposition}[theorem]{Proposition}
\newtheorem{definition}[theorem]{Definition}
\numberwithin{equation}{section}
\begin{document}

\title[Saddle-shaped solutions]{Saddle-shaped solutions of bistable
diffusion equations in all of $\R^{2m}$}

\author{Xavier Cabr{\'e}}
\thanks{Both authors were supported by MTM2005-07660-C02-01.
This work was part of the ESF Programme ``GLOBAL''}
\address{ICREA and Universitat Polit{\`e}cnica de Catalunya,
Departament de Matem{\`a}-tica Aplicada I, Diagonal 647, 08028
Barcelona, Spain}
\email{xavier.cabre@upc.edu}

\author{Joana Terra}
\thanks{The second author was supported by the FCT grant
SFRH/BD/8985/2002}
\address{Universitat Polit{\`e}cnica de Catalunya,
Departament de Matem{\`a}tica  Aplicada I, Diagonal 647, 08028
Barcelona, Spain}
\email{joana.terra@upc.edu}

\begin{abstract}
We study the existence and instability properties of saddle-shaped 
solutions of the semilinear elliptic equation $-\Delta u = f(u)$ 
in the whole $\R^{2m}$, where $f$ is of bistable type. 
It is known that in dimension $2m=2$ there exists a saddle-shaped 
solution. This is a solution which changes sign in $\R^2$ and 
vanishes only on $\{|x_1|=|x_2|\}$.  
It is also known that this solution is unstable. 

In this article we prove the existence of saddle-shaped 
solutions in every even dimension, as well as their instability
in the case of dimension $2m=4$. 
More precisely, our main result 
establishes that if $2m=4$, every solution vanishing on the
Simons cone $\{(x^1,x^2)\in\R^m\times\R^m : |x^1|=|x^2|\}$
is unstable outside of every compact set and, as a consequence, 
has infinite Morse index. These results are relevant in connection with
a conjecture of De Giorgi extensively studied in recent years
and for which the existence of a counter-example in high dimensions
is still an open problem.
\end{abstract}

\maketitle

\section{Introduction}

This paper is concerned with the study of bounded solutions
of bistable diffusion equations
\begin{equation}\label{eq}
-\Delta u=f(u)\quad {\rm in }\,\R^{n}.
\end{equation}
In the last years there has been special interest
in a symmetry property of certain solutions. 
It consists of establishing whether every monotone solution
$u$ of $\eqref{eq}$ depends only on one Euclidean variable or, equivalently,
whether the level sets of such solutions are all hyperplanes. This  question was
raised by De Giorgi~\cite{DG2} in $1978$, who conjectured that the level sets of
every bounded, monotone in one direction, solution of the Allen-Cahn equation
\begin{equation}\label{GL}
-\Delta u=u-u^3\quad{\rm in }\,\R^n,
\end{equation}
must be hyperplanes, at least if $n\leq 8$. The conjecture has 
been proven to be
true when the dimension $n=2$ by Ghoussoub and Gui \cite{GG}, and when $n=3$ by
Ambrosio and Cabr\'e \cite{AC}. For $4\leq n\leq 8$ and assuming an additional limiting 
condition on~$u$, it has been established by Savin \cite{OS}
(see section $2$ for more details). 

It remains open
the existence of a counter-example in higher dimensions. By a result of
Jerison and Monneau \cite{JM}, the
existence of a counter-example to the conjecture in $\R^{n+1}$ would be
established if one could prove the
existence of a bounded, even with respect to each coordinate, global
minimizer of \eqref{GL} in~$\R^{n}$. By global minimizer we mean an absolute 
minimizer of the energy with respect to
compactly supported perturbations. On the other hand, by a deep result of
Savin \cite{OS}, for $n\leq 7$ every global minimizer is an odd
function of only one Euclidean variable. In particular, an even function
with respect to each coordinate can not be a global
minimizer in  $\R^{n}$ whenever $n \leq 7$. 

The crucial remaining question is whether a global minimizer of \eqref{GL}, 
even with respect to each coordinate, exists in higher dimensions. A natural
candidate is expected to be found in the class of saddle-shaped
solutions, that is, solutions that depend only on two radial
variables $s=|x^1|$ and $t=|x^2|$, change sign in
$\R^n=\R^{2m}=\{(x^1, x^2)\in\R^m\times\R^m\}$ and 
vanish only on the Simons cone ${\mathcal C}=\{s=t\}$. 
This cone is of importance in the
theory of minimal surfaces and its variational properties are related
to the conjecture of De Giorgi.  Namely, the cone 
${\mathcal  C}\subset\R^{2m}$ has zero mean curvature in all even dimensions
(except at the singular point $0$), but it 
is a minimal cone (minimal in the variational sense) 
if and only if $2m\geq 8$ (see section $2$).

Towards the understanding of this open question on global minimizers,
we study here saddle-shaped solutions and their stability properties.
To be precise in our statements, we first present the definitions 
to be used throughout the paper. 

Equation ($\ref{eq}$) is
the Euler-Lagrange equation associated to the energy functional
\begin{equation}\label{energia}
{\mathcal E}(v,\Omega):=\int_\Omega \left\{ \frac{1}{2}|\nabla
v|^2+G(v)\right\} dx,\qquad\text{where } \, G'=-f
\end{equation}
and $\Omega\subset\R^n$ is a bounded domain. The
energy ${\mathcal E}$ leads to the following
notions on minimality, stability, and Morse index of bounded
solutions.

\begin{definition}\label{def1.1}
{\rm Let $f\in C^1(\R)$.}

\renewcommand{\labelenumi}{$($\alph{enumi}$)$}
\begin{enumerate}
\item {\rm 
We say that a bounded $C^1$ function $u:\R^n\rightarrow\R$ 
is a {\it global minimizer} of $(\ref{eq})$ if
$${\mathcal E}(u,\Omega)\leq {\mathcal E}(u+\xi,\Omega),$$ for
every bounded domain $\Omega$ and every $C^{\infty}$ function
$\xi$ with compact support in $\Omega$.
}
\item {\rm 
We say that a bounded solution $u$ of
$(\ref{eq})$ is {\it stable} if the second variation of energy
$\delta^2{\mathcal E}/\delta^2\xi$ with respect to compactly
supported perturbations $\xi$ is nonnegative. That is, if
\begin{equation}\label{stable}
Q_u(\xi):=\int_{{\R}^n}\left\{
|\nabla\xi|^2-f'(u)\xi^2\right\}dx\geq 0 \quad {\rm for \, all}
\;\xi\in C^\infty_c(\R^n).
\end{equation}
We say that $u$ is {\it unstable} if and only if $u$ is not stable.
}
\item {\rm 
We say that a bounded solution $u$ of
$(\ref{eq})$
has {\it finite Morse index} equal to $k\in\{0,1,2,\ldots\}$ if $k$ is the 
maximal dimension of a subspace $X_k$ of $C^1_c (\R^n)$ such that 
$Q_u(\xi) < 0$ for every $\xi\in X_k \setminus \{0\}$. Here
$C^1_c (\R^n)$ is the space of $C^1(\R^n)$ functions with compact support 
and $Q_u$ is defined in \eqref{stable}. 
If there is no such finite integer $k$, we then say that $u$ has
{\it infinite Morse index}.
}
\end{enumerate}
\end{definition}

Clearly, every global minimizer is a stable solution. At the same time, 
every stable solution has finite Morse index equal to $0$. 
It is also easy to verify that every solution with finite Morse
index is stable outside of a compact set (see Theorem~\ref{uns}
and its proof for more details). In some
references, global minimizers are called ``local minimizers'', where
local stands for the fact that the energy is computed in bounded domains.

The following assumption on $G$,
\begin{equation}\label{condG}
G\geq 0=G(\pm M) \; {\rm in} \,\R \quad{\rm and}\quad G>0 \;{\rm in}\,
(-M,M)
\end{equation}
for some constant $M>0$, guarantees  the existence of an
increasing solution of $(\ref{eq})$ in dimension 1, that is 
in all of $\R$, taking values onto
$(-M,M)$; see Lemma $\ref{lemma1D}$. In addition,
such increasing solution is unique up to translations of the
independent variable. Normalizing it to vanish at the origin, we
call it~$u_0$. Thus, we have
\begin{equation}\label{u0}
\left\{
\begin{array}{l}
u_0:\R\rightarrow (-M,M) \\
u_0(0)=0,\, \dot{u}_0>0, \;{\rm and}\\
 -\ddot{u}_0=f(u_0) \quad {\rm in}\, \R.
 \end{array}
 \right.
\end{equation}
We will see that \eqref{condG} is related to the bistable character of $f$.
Hypothesis \eqref{condG} is satisfied by $f(u)=u-u^3$, for which
$G(u)=(1/4)(1-u^2)^2$ and $M=1$. For this nonlinearity, 
the solution $u_0$ can be computed explicitly and it is given by 
$u_0(\tau)=\tanh(\tau/\sqrt{2})$.

Next, note that for every given $b\in\R^n$ with $|b|=1$ and $c\in\R$, the function
\begin{equation}\label{1d}
u_{b,c}(x)=u_0(b\cdot x+c)\quad \textrm{ for }  x\in\R^n,
\end{equation}
is a bounded solution of $(\ref{eq})$. These solutions are called
$1$-D solutions since they depend only on one Euclidean variable. Equivalently,
these are the solutions with every of their
level sets being a hyperplane. As a consequence of a
result of Alberti, Ambrosio, and the first author \cite{AAC}, it is
known now that, under hypothesis ($\ref{condG}$) on the nonlinearity, 
every $1$-D solution $u_{b,c}$ is a
global minimizer of $(\ref{eq})$. In particular, $u_{b,c}$ is a stable solution.

Furthermore, by a
result of Savin~\cite{OS}
in connection with the conjecture of De Giorgi, we know
now that $1$-D solutions are the only global minimizers of the Allen-Cahn 
equation~\eqref{GL}
in $\R^n$ for $n\leq 7$. On the other hand, as mentioned before 
(see Theorem~\ref{jermon} in next section
for more details), trying to find 
a counter-example to the conjecture in higher dimensions 
(still an open problem) is related to the possibility of finding certain 
global minimizers in dimensions $n\geq 8$ which are not $1$-D. 
More precisely, the
existence of a counter-example to the conjecture in $\R^{n+1}$ would be
established if one could prove the
existence of a bounded, even with respect to each coordinate, global
minimizer of \eqref{GL} in $\R^{n}$.
Natural candidates to be minimizers of this type are certain saddle-shaped solutions.
The study of their existence and stability properties is the goal
of this paper.

The type of saddle-shaped solutions that we consider are expected to have
relevant variational properties due to
a well known connection between 
semilinear equations modelling phase transitions 
and the theory of minimal surfaces (see section
$2$ for details). Such connection also motivated De
Giorgi to state his conjecture. More precisely, the saddle 
solutions that we consider are odd with respect
to the Simons cone, which is defined as follows. For $n=2m$, the Simons cone
is given by
\begin{equation}\label{Sim}
{\mathcal C} = \{x\in\R^{2m}: x_1^2 + x_2^2 + \cdots +
x_m^2=x_{m+1}^2 + x_{m+2}^2 + \cdots + x_{2m}^2\}.
\end{equation}
It is easy to
verify that ${\mathcal C}$ has zero mean curvature at every
$x\in{\mathcal C}\backslash\{0\}$, in every dimension $2m\geq 2$.
However, it is only in dimensions $2m\geq 8$ (besides the case $2m=2$)
that this cone is
locally stable. In dimensions $2m\geq 8$ it is in addition a
minimizer of the area functional, that is, it is a minimal cone 
(in the variational sense); see~\cite{G}.

For $x=(x_1,\dots, x_{2m})\in\R^{2m}$, we define two
radial variables $s$ and $t$ by
\begin{equation}\label{coor}
\left\{\begin{array}{rcll} s& = & {\ds \sqrt{x_1^2+...+x_m^2}}& \geq 0\\
 t & = &{\ds \sqrt{x_{m+1}^2+...+x_{2m}^2}}& \geq 0.
\end{array}
\right.
\end{equation}
The Simons cone is given by ${\mathcal C}=\{s=t\}$.

We now introduce our notion of saddle solution. These
solutions depend only on $s$ and $t$, and are odd with
respect to $\mathcal C$.

\begin{definition}\label{def} 
{\rm Let $f\in C^1(\R)$ be odd. We  say  that 
$u:\R^{2m}\rightarrow\R$ is a} {\it saddle-shaped
solution} {\rm  (or simply a saddle solution) of 
\begin{equation}\label{eq2m}
-\Delta u=f(u) \quad {\rm in }\;\R^{2m}
\end{equation}  if $u$ is a bounded solution of
$(\ref{eq2m})$ and, with $s$ and $t$ defined by \eqref{coor},}
\renewcommand{\labelenumi}{$($\alph{enumi}$)$}
\begin{enumerate}
\item{\rm  $u$ depends only on the variables $s$ and $t$. We write
$u=u(s,t)$; \item $u>0$ in ${\mathcal O}=\{s>t\}$; \item
  $u(s,t)=-u(t,s)$ in $\R^{2m}$.}
\end{enumerate}
\end{definition}

It follows from (c) that every saddle solution vanishes on the 
Simons cone ${\mathcal C}=\{s=t\}$. Note also that saddle solutions are
even with respect to each coordinate $x_i$, $1\le i\le 2m$, as
in the result of Jerison-Monneau.

By classical elliptic regularity theory, it is well known that
for $f\in C^1(\R)$, every bounded solution of $-\Delta u=f(u)$ in $\R^{n}$
satisfies $u\in C^{2,\alpha} (\R^n)$ for all $0<\alpha <1$, and
thus it is a classical solution. In particular, saddle solutions
are classical solutions. See the beginning of section~3
for more details.

In Theorem~\ref{exis} below, we establish the existence of 
a saddle solution in every even dimension for odd bistable 
nonlinearities. This will be accomplished using odd
reflection with respect to the cone ${\mathcal C}$, after
constructing with a variational technique 
a positive solution in
${\mathcal O}=\{s>t\}$ depending only on $s$ and $t$.

Saddle solutions were first studied by Dang, Fife, and 
Peletier~\cite{DFP}  in dimension $n=2$ for $f$ odd, bistable, 
and with $f(u)/u$ decreasing for $u\in(0,1)$. They proved the
existence and uniqueness of a saddle solution in dimension $2$. 
They also established monotonicity properties
and the asymptotic behavior of the saddle solution. 
Its instability, already indicated in a partial
result of~\cite{DFP}, was studied in detail by
Schatzman~\cite{Sc}. This paper established that the saddle solution is 
unstable in $\R^2$ by studying the linearized 
operator at the solution in some appropriate functional spaces,
and by showing that  it has a strictly negative eigenvalue corresponding to an
eigenfunction having the symmetries of the square. Moreover, in
the case of the Allen-Cahn equation $(\ref{GL})$, the
linearized operator was shown to have exactly one negative eigenvalue. 

The article \cite{ABG} studies vector-valued saddle solutions in $\R^2$.
The recent work~\cite{ACM} concerns scalar saddle type solutions in
$\R^2$ changing sign on more nodal lines than $x_1=\pm x_2$.

The instability of the saddle solution in dimension $2$ 
(in the sense of Definition~\ref{def1.1}) is nowadays a
consequence of a more recent result related to the conjecture of De
Giorgi. Namely, \cite{GG} and \cite{BCN} 
established that, for all $f\in C^1$, every bounded stable solution 
of ($\ref{eq}$) in $\R^2$  must be a $1$-D solution, that is, a solution
depending only on one Euclidean variable. 
In particular, in $\R^2$ bounded stable solutions can not be 
saddle-shaped.
These ideas were further used in~\cite{Sh} when the dimension $n=2$.

To state our results on saddle solutions, given a $C^1$ nonlinearity  
$f:\R\rightarrow\R$ and $M>0$, define 
\begin{equation}\label{defG}
G(u)=\int_u^M f.
\end{equation}
We have that
$G\in C^2(\R)$ and
$G'=-f.$ In our results we assume some, or all, of the following
conditions on $f$. For some $M>0$, and with $G$ defined as above,
consider the following properties of $f$:
\begin{eqnarray}
& f \text{ is odd;} \label{H1}\\
& G\geq 0=G(\pm M) \text{ in } \R \text{ and } G>0 \text{ in } (-M,M); \label{H2}\\
& f \text{ is concave in } (0,M).\label{H3}
\end{eqnarray}

Condition \eqref{H2} is actually condition ($\ref{condG}$) presented before
in relation with the existence of $1$-D solutions.
Note that if \eqref{H1} and \eqref{H2} hold, then
$f(0)=f(\pm M)=0$. On the other hand, if $f$ is odd in~$\R$, positive and
concave in $(0,M)$, and negative in $(M,\infty)$, then $f$ satisfies
\eqref{H1}, \eqref{H2}, and \eqref{H3}.  Hence, the nonlinearities $f$ that we
consider are of ``balanced bistable type", while the potentials $G$
are of ``double-well type". Our three assumptions \eqref{H1}, \eqref{H2}, \eqref{H3} are
satisfied for the Allen-Cahn (or scalar Ginzburg-Landau)
equation $-\Delta u= u-u^3.$
In this case we have that $G(u)=(1/4)(1-u^2)^2$ and
 $M=1$. The three hypothesis also hold
for the equation $-\Delta u=\sin (\pi u)$, for which
$G(u)=(1/\pi)(1+\cos (\pi u))$.

Our first result establishes the existence of a saddle solution  in~$\R^{2m}$
and some of its variational properties.

\begin{theorem}\label{exis} 
Let $f\in C^1(\R)$ satisfy \eqref{H1} and \eqref{H2} for some constant $M>0$,
where $G$ is defined by \eqref{defG}. Then, for every even dimension $2m\geq 2$,
there exists a saddle-shaped solution $u$ as in Definition~{\rm \ref{def}} 
of $-\Delta u= f(u)$ in~$\R^{2m}$.

In addition, $u$ satisfies $|u|<M$ in $\R^{2m}$, as well as the energy estimate
\begin{equation}\label{est-ener}
{\mathcal E}(u,B_R)=\int_{B_R} \left\{ \frac{1}{2}|\nabla
u|^2+G(u)\right\} dx \leq CR^{2m-1} \qquad\text{for all }R>1,
\end{equation}
where $C$ is a constant independent of $R$ and $B_R$ denotes the
open ball of radius~$R$ centered at $0$.

If in addition $f$ satisfies \eqref{H3}, then
the second variation of energy $Q_u(\xi)$
at~$u$, as defined in \eqref{stable}, is nonnegative for all
functions $\xi\in C^1(\R^{2m})$ with compact support
in $\R^{2m}$ and vanishing on the
Simons cone $\mathcal{C}=\{s=t\}$.
\end{theorem}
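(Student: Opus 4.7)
I would work first on the component $\mathcal O=\{s>t\}$. On the bounded domains $\Omega_R:=B_R\cap\mathcal O$, I minimize $\mathcal E(\cdot,\Omega_R)$ in the class of $H^1$ functions that depend only on $(s,t)$ and vanish on $\mathcal C\cap\overline{\Omega_R}$. By \eqref{H2}, truncation to $[0,M]$ does not raise the energy, so a nonnegative minimizer $u_R$ with $0\le u_R\le M$ exists by the direct method. Non-triviality for $R$ large comes from comparing against the explicit admissible competitor $v(x):=u_0((s-t)/\sqrt 2)$ from \eqref{u0}: since $G(u_0)$ and $|\dot u_0|^2$ decay exponentially away from $\mathcal C$, $\mathcal E(v,\Omega_R)=O(R^{2m-1})$, strictly smaller than $\mathcal E(0,\Omega_R)=G(0)|\Omega_R|$ for $R$ large. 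The strong maximum principle upgrades $u_R\ge 0$ to $u_R>0$ in $\Omega_R$, and interior $C^{2,\alpha}$-estimates plus a diagonal argument yield a bounded positive limit $u$ on $\mathcal O$ solving $-\Delta u=f(u)$ with $u=0$ on $\mathcal C$. The odd extension $u(s,t):=-u(t,s)$, which is $C^{2,\alpha}$ across $\mathcal C$ thanks to \eqref{H1}, produces the saddle solution on $\R^{2m}$; the bound $|u|<M$ then follows from the strong maximum principle applied to $M\mp u$.

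\textbf{Energy estimate.} I would bound the energy of $u_R$ uniformly in $R$ and pass to the limit. For $1<R_0<R$, use as competitor the saddle-symmetric function $w_R$ that coincides with $u_R$ outside $B_{R_0}$ and equals $v$ inside $B_{R_0-1}$, interpolated by a smooth radial cutoff in the annulus. Minimality of $u_R$ gives $\mathcal E(u_R,\Omega_R\cap B_{R_0})\le\mathcal E(w_R,\Omega_R\cap B_{R_0})$, and the right-hand side is $O(R_0^{2m-1})$ because the $v$-part is controlled by $C\,\mathcal H^{2m-1}(\mathcal C\cap B_{R_0})$ by the exponential tails of $u_0$, while the annular correction uses the uniform $L^\infty$-bounds on $u_R,v,\nabla u_R,\nabla v$ (the gradient bounds following from interior elliptic regularity and $|u_R|\le M$). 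Letting $R\to\infty$ and using $G(u)=G(-u)$ (from the oddness of $f$ in \eqref{H1}) yields $\mathcal E(u,B_{R_0})\le CR_0^{2m-1}$.

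\textbf{Restricted stability via Picone.} Given $\xi\in C^1_c(\R^{2m})$ with $\xi\equiv 0$ on $\mathcal C$, split $\xi=\xi_{\mathcal O}+\xi_{\mathcal O^c}$ by multiplying with the characteristic functions of $\overline{\mathcal O}$ and its complement. Both pieces lie in $H^1(\R^{2m})$ with compact support (since $\xi\in C^1$ vanishes on the Lipschitz set $\mathcal C$) and have disjoint interior supports, so $Q_u(\xi)=Q_u(\xi_{\mathcal O})+Q_u(\xi_{\mathcal O^c})$. The change of variables $y=\sigma x$ swapping $x^1\leftrightarrow x^2$, combined with the saddle identity $u\circ\sigma=-u$ and the evenness of $f'$ (from \eqref{H1}), reduces $Q_u(\xi_{\mathcal O^c})$ to $Q_u$ of a function supported in $\overline{\mathcal O}$, so it suffices to prove $Q_u(\eta)\ge 0$ for $\eta\in H^1_0(\mathcal O)$ of compact support. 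In $\mathcal O$, $u\in(0,M)$ solves the equation, so testing $-\Delta u=f(u)$ against the admissible test function $\eta^2/(u+\varepsilon)\in H^1_0(\mathcal O)$ and using the pointwise identity
\[
|\nabla\eta|^2\;\ge\;\frac{2\eta\,\nabla\eta\cdot\nabla u}{u+\varepsilon}-\frac{\eta^2|\nabla u|^2}{(u+\varepsilon)^2}
\]
yields $\int_{\mathcal O}|\nabla\eta|^2\ge\int_{\mathcal O}f(u)\eta^2/(u+\varepsilon)$. As $\varepsilon\downarrow 0$, dominated convergence applies because $f(u)/(u+\varepsilon)$ is uniformly bounded by $\sup_{(0,M)}|f(u)/u|$ (finite since $f$ is Lipschitz at $0$), giving the Picone inequality $\int_{\mathcal O}|\nabla\eta|^2\ge\int_{\mathcal O}(f(u)/u)\eta^2$. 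Finally, concavity \eqref{H3} together with $f(0)=0$ yields $f(u)/u\ge f'(u)$ on $(0,M)$ (the secant slope from the origin exceeds the tangent slope at the right endpoint), and $Q_u(\eta)\ge 0$ follows. The crux lies in the interplay between the Picone inequality—which forces working on the side $\mathcal O$ where $u>0$—and the requirement that $\xi$ be allowed to vanish anywhere on $\mathcal C$; the saddle-symmetry reduction is what unlocks this, at the modest cost of checking the $H^1$-validity of the splitting, automatic from $\xi|_{\mathcal C}=0$.
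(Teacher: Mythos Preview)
Your proof follows essentially the same strategy as the paper: minimize in $\mathcal O_R$ among $(s,t)$-symmetric functions, compare against a profile built on the distance to $\mathcal C$ to obtain both non-triviality and the $R^{2m-1}$ energy bound, pass to the limit by compactness, and derive restricted stability from the fact that $u>0$ is a supersolution of its own linearization in $\mathcal O$ (your Picone computation is exactly the argument behind the result of \cite{AAC} that the paper invokes, and your symmetry reduction makes explicit what the paper records as ``by odd symmetry''). One point needs repair, however: your claim that $\dot u_0$ and $G(u_0)$ decay \emph{exponentially} requires $G''(\pm M)>0$, which is not implied by \eqref{H1}--\eqref{H2} alone (it does follow from \eqref{H3}, but the existence and energy statements are asserted without \eqref{H3}). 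The paper sidesteps this by taking the competitor $\min\{M,(s-t)/\sqrt 2\}$, whose energy density vanishes identically outside a fixed tube around $\mathcal C$, so no decay of $u_0$ is needed. Your conclusion $\mathcal E(v,\Omega_R)=O(R^{2m-1})$ is nonetheless correct under \eqref{H1}--\eqref{H2}: the substitution $\tau\mapsto u_0(\tau)$ together with $\tfrac12\dot u_0^2=G(u_0)$ gives $\int_{\R}\{\tfrac12\dot u_0^2+G(u_0)\}\,d\tau=\int_{-M}^{M}\sqrt{2G}<\infty$ regardless of the decay rate, and this is what you should invoke in place of exponential decay.
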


As a consequence of the last statement in the theorem,
the instability
of saddle solutions in low dimensions is related to
perturbations which do not vanish on the
Simons cone, and hence, which change the zero level set of the solution.

We prove the existence of 
a saddle solution by first constructing
a positive solution in
${\mathcal O}=\{s>t\}$ depending only on $s$ and $t$. 
For this, we use a variational method.
We then obtain the saddle solution in all 
the space through odd
reflection with respect to the cone ${\mathcal C}$.

Further variational and monotonicity properties of saddle solutions, 
as well as their asymptotic behavior, will be 
established in a forthcoming article \cite{CT} by the same
authors.

Note that for functions $u$ depending only on $s$ and $t$,
such as saddle solutions, the energy functional \eqref{energia} becomes
\begin{equation}\label{enerst}
{\mathcal E}(u,\Omega)=a_m \int_\Omega s^{m-1}t^{m-1}\left\{ \frac{1}{2}
(u_s^2+u_t^2)+G(u)\right\} ds dt,
\end{equation}
where $a_m$ is a positive constant depending only on $m$
---here we have assumed that $\Omega\subset\R^{2m}$ is radially symmetric
in the first $m$ mariables and also in the last $m$ variables,
and we have abused notation by identifying $\Omega$ with its
projection in the $(s,t)$ variables. 
In these variables, the semilinear equation \eqref{eq2m} reads
\begin{equation}\label{eqst}
-(u_{ss}+u_{tt})-(m-1){\Big (}\frac{u_s}{s}+\frac{u_t}{t}{\Big
)}=f(u)\qquad \text{for } s>0,\, t>0. 
\end{equation}

The following is our main result. In dimension $n=4$, we establish 
the instability outside of every compact set
of all bounded solutions (not necessarily depending on $s$ and $t$ only)
that vanish on the Simons cone ${\mathcal C}=\{s=t\}$. As a consequence, 
the Morse index of such solutions is proved to be infinite.

\begin{theorem}\label{uns} Let $f\in C^1(\R)$ satisfy 
\eqref{H1}, \eqref{H2}, and \eqref{H3}.
Then, every bounded solution of $-\Delta u=f(u)$ in $\R^4$ that
vanishes on the Simons cone 
${\mathcal C}=\{x_1^2+x_2^2=x_3^2+x_4^2\}$ is unstable. 
Furthermore, every such solution $u$ is unstable outside of every
compact set. That is, for every compact set $K$ of $\R^4$
there exists $\xi\in C^1(\R^4)$ with compact support in
$\R^4\setminus K$ for which 
$Q_u(\xi) < 0$, where $Q_u$ is defined in \eqref{stable}. 
As a consequence, $u$ has infinite Morse index in the sense of
Definition~$\ref{def1.1}$.

In particular, all the previous statements hold true for
every saddle-shaped solution as in Definition~$\ref{def}$
if $2m=4$.
\end{theorem}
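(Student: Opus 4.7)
The plan is to exploit the instability of the Simons cone $\mathcal{C}$ in $\R^{4}$ (it is minimal but not stable, since $2m=4<8$) and transfer it to an instability of $u$ via compactly supported test functions concentrated in a tubular neighborhood of $\mathcal{C}$. Scale-invariance of the cone will let us place these test functions in arbitrary annuli, yielding instability outside every compact set and, by choosing disjoint dyadic annuli, infinitely many linearly independent negative directions.

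First I would fix signs. The case $u\equiv 0$ is handled directly (since $f'(0)>0$, taking $\xi$ to be a large bump gives $Q_u(\xi)<0$). Otherwise, since $u$ vanishes on the entire hypersurface $\mathcal{C}$, unique continuation forces $u$ to take nonzero values on both sides of $\mathcal{C}$, and the strong maximum principle then forces opposite signs; after possibly changing sign we may assume $u>0$ in $\mathcal{O}=\{s>t\}$ and $u<0$ in $\{s<t\}$. The Hopf boundary point lemma gives $\partial_\nu u>0$ on $\mathcal{C}\setminus\{0\}$, where $\nu$ is the unit normal into $\mathcal{O}$. Note that Theorem~\ref{exis} already supplies $Q_u(\xi)\ge 0$ for every compactly supported $\xi$ vanishing on $\mathcal{C}$; hence any instability witness must be nonzero on the cone. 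The key classical input is that for $2m=4$ the Simons cone is unstable as a minimal hypersurface: there exists $\phi\in C^{1}_{c}(\mathcal{C}\setminus\{0\})$ with $\int_{\mathcal{C}}(|\nabla_{\mathcal{C}}\phi|^{2}-|A_{\mathcal{C}}|^{2}\phi^{2})\,d\mathcal{H}^{3}<0$, and by scaling $\phi$ can be placed in any annular piece $\mathcal{C}\cap\{R<|y|<2R\}$.

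In a tubular neighborhood of $\mathcal{C}$, introduce normal coordinates $(y,d)$ with $y\in\mathcal{C}$ and $d$ the signed normal distance into $\mathcal{O}$, and for a smooth cutoff $\chi_{T}$ equal to $1$ on $[-T,T]$ set
$$
\xi(x)=\phi\bigl(\pi(x)\bigr)\,u_{0}'\bigl(d(x)\bigr)\,\chi_{T}\bigl(d(x)\bigr),
$$
where $\pi$ is the nearest-point projection to $\mathcal{C}$ and $u_{0}$ is the $1$D heteroclinic from \eqref{u0}. The profile $u_{0}'$ is the $1$D Jacobi field of the linearized operator ($-u_{0}''=f'(u_{0})u_{0}'$), positive and exponentially decaying. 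Expanding $|\nabla\xi|^{2}$ together with the volume element $\mathcal{J}(y,d)=\prod_{i}(1-d\,\kappa_{i}(y))=1-\tfrac{1}{2}|A_{\mathcal{C}}|^{2}d^{2}+O(d^{3})$ (using $\sum_{i}\kappa_{i}=0$ on the minimal cone), integrating by parts in $d$, and invoking the identity $\int_{\R}\bigl((u_{0}'')^{2}-f'(u_{0})(u_{0}')^{2}\bigr)\,dd=0$, the leading normal contribution cancels and one obtains
$$
Q_u(\xi)=\Bigl(\int_{\R}(u_{0}')^{2}\,dd\Bigr)\int_{\mathcal{C}}\bigl(|\nabla_{\mathcal{C}}\phi|^{2}-|A_{\mathcal{C}}|^{2}\phi^{2}\bigr)\,d\mathcal{H}^{3}+\mathcal{R}(T,R),
$$
with $\mathcal{R}$ collecting exponentially small cutoff errors from $\chi_{T}$ and the discrepancy $f'(u)-f'(u_{0}(d))$; the latter is absorbed using concavity \eqref{H3} and a local pointwise comparison between $u$ and $u_{0}(d)$ in the tube, based on $\partial_\nu u>0$ on compact pieces of $\mathcal{C}$.

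The main term is strictly negative by the choice of $\phi$, so for $T$ large and the tubular width small enough $Q_u(\xi)<0$. Since $R$ is arbitrary, $u$ is unstable outside every compact set; choosing $\phi$ supported in disjoint dyadic annuli $\{4^{k}<|y|<2\cdot 4^{k}\}\cap\mathcal{C}$ produces a sequence $\xi_{k}$ with pairwise disjoint supports and $Q_u(\xi_{k})<0$, so any finite-dimensional span lies in the negative cone of $Q_u$ and $u$ has infinite Morse index. The technical heart---and the main obstacle---is the computation of $Q_u(\xi)$: rigorously reducing it to the cone's second variation requires controlling $f'(u)-f'(u_{0}(d))$ uniformly in a tube around $\mathcal{C}$ at infinity, without invoking the precise asymptotic behaviour of $u$ (deferred to a forthcoming paper). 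A plausible workaround is to replace $u_{0}'$ in the profile by $\partial_\nu u(y)\cdot\rho(d)$ for a compactly supported $\rho$ and redo the calculation with the true linearized operator, using only the positivity and smoothness of $\partial_\nu u$ on~$\mathcal{C}$.
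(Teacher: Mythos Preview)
Your geometric skeleton matches the paper's: both take a test function of the form (tangential variation on the cone)$\times$(normal profile $\dot u_0$), and both use scale invariance to push the support to infinity. In fact the paper's coordinates $(y,z)=((s+t)/\sqrt2,(s-t)/\sqrt2)$ are precisely your tubular coordinates, since by Lemma~\ref{lemmadist} the signed distance to $\mathcal C$ equals $z$; and the paper's explicit integration by parts in $z$ (using that $\dot u_0$ solves the linearized one-dimensional equation) is your ``leading normal contribution cancels'' step written out, ending with a Hardy-type integral on the cone rather than the abstract Jacobi form $\int(|\nabla_{\mathcal C}\phi|^2-|A_{\mathcal C}|^2\phi^2)$.

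The genuine gap is exactly the one you flag: controlling $f'(u)-f'(u_0(d))$ in the tube, uniformly as the annulus moves to infinity. Neither of your proposed fixes works without asymptotics for $u$: a comparison based on $\partial_\nu u>0$ on \emph{compact} pieces of $\mathcal C$ gives no uniform bound as $R\to\infty$, and replacing the profile by $\partial_\nu u(y)\rho(d)$ reintroduces the unknown function $u$ into the leading term. The paper's key idea, which you are missing, is Proposition~\ref{prop}: Modica's gradient bound $|\nabla u|^2\le 2G(u)$ forces the \emph{global} pointwise inequality $|u(x)|\le |u_0(z)|$ for every bounded solution vanishing on $\mathcal C$. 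Concavity \eqref{H3} and evenness of $f'$ then give $f'(u(x))\ge f'(u_0(z))$ \emph{everywhere}, hence $Q_u(\xi)\le Q_{u_0}(\xi)$ for all $\xi$, with no remainder $\mathcal R$ to absorb. One is left with the purely explicit task of making $Q_{u_0}(\xi_a)<0$ for $\xi_a(y,z)=\eta(y/a)\dot u_0(z)$; after the integration by parts this reduces to the failure of a Hardy inequality in $\R^3$ with constant $1$ versus the sharp constant $1/4$. Two minor side remarks: your appeal to Theorem~\ref{exis} for $Q_u\ge 0$ on perturbations vanishing on $\mathcal C$ applies only to solutions positive in $\mathcal O$, not to an arbitrary solution vanishing on $\mathcal C$; and the sign discussion via unique continuation is unnecessary, since the comparison $Q_u\le Q_{u_0}$ is indifferent to the sign of $u$.
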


As mentioned before, the instability of the saddle solution 
in dimension~$2$ was already proven by Shatzman \cite{Sc}.  
More recently we have established the instability result also
in dimension $6$ ---this is to appear in a
forthcoming paper~\cite{CT}. The computations in section~6 of the
present paper, and the more delicate ones in \cite{CT}, suggest the
possibility of saddle solutions being stable in dimensions
$2m\geq 8$. Such stability result would be a promising hint towards
the possible global minimality of saddle solutions in 
high dimensions, and hence towards a counter-example to 
the conjecture of De Giorgi.

A crucial ingredient in the proof of Theorem
$\ref{uns}$ is the following pointwise estimate.

\begin{proposition}\label{prop} 
Let $f\in C^1(\R)$ satisfy \eqref{H1} and \eqref{H2}. 
If $u$ is a bounded solution of $-\Delta u=f(u)$ in $\R^{2m}$ that
vanishes on the Simons cone ${\mathcal C}=\{s=t\}$, then
\begin{equation}\label{estimate}
|u(x)|\leq |u_0({\rm dist}(x,{\mathcal C}))|=\left| u_0{\Big
(}\frac{s-t}{\sqrt{2}}{\Big )}\right| \quad \text{ for all }
x\in\R^{2m},
\end{equation}
where $u_0$ is defined by $(\ref{u0})$ and 
${\rm dist}(\cdot,{\mathcal C})$ denotes the distance 
to the Simons cone.

In addition, the function $u_0((s-t)/\sqrt{2})$ is a 
supersolution of $-\Delta u=f(u)$ in the set
${\mathcal O}=\{s>t\}$.

\end{proposition}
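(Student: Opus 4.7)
The plan is to prove the two claims separately: the supersolution property by a direct computation in $(s,t)$ coordinates, and the pointwise bound by combining Modica's classical gradient estimate with a one-dimensional comparison along the straight segment from $x$ to its nearest point on $\mathcal C$.

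For the supersolution, set $\tau = (s-t)/\sqrt 2$ and $U(x) := u_0(\tau)$. At any point with $s,t>0$ one has $U_s = \dot u_0/\sqrt 2$, $U_t = -\dot u_0/\sqrt 2$, and $U_{ss}+U_{tt} = \ddot u_0$, so inserting these into formula \eqref{eqst} and using $\ddot u_0 = -f(u_0)$ from \eqref{u0} gives
\[
-\Delta U \;=\; f(U) \,+\, \frac{m-1}{\sqrt 2}\,\dot u_0(\tau)\,\frac{s-t}{s\,t}.
\]
On $\mathcal O = \{s>t\}$ the second summand is nonnegative since $\dot u_0 > 0$ and $s>t>0$, so $-\Delta U \ge f(U)$ classically on $\mathcal O \cap \{t>0\}$. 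Across the set $\{t=0\}$ the function $U$ is concave in the $x^2$-variables (because $u_0$ is increasing and $|x^2|$ is convex), so the distributional Laplacian of $U$ carries no positive singular part there; hence the supersolution inequality persists on all of $\mathcal O$ in the distributional sense.

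For the pointwise bound \eqref{estimate}, the key input is Modica's inequality: since $G \ge 0$ on $\R$ by \eqref{H2}, every bounded $C^2$ solution of $-\Delta u = f(u)$ in $\R^{2m}$ satisfies $\tfrac12 |\nabla u(x)|^2 \le G(u(x))$ pointwise. Fix $x \in \R^{2m}$, pick a nearest point $p \in \mathcal C$, and parametrize the segment from $p$ to $x$ by arc length as $\gamma:[0,d]\to\R^{2m}$ with $d = \operatorname{dist}(x,\mathcal C) = |s-t|/\sqrt 2$. Setting $v(\sigma) := u(\gamma(\sigma))$ we have $v(0) = 0$ (since $p \in \mathcal C$) and $|v'(\sigma)| \le \sqrt{2\,G(v(\sigma))}$. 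The profile $u_0$ itself satisfies $\dot u_0 = \sqrt{2\,G(u_0)}$, the first integral of the ODE in \eqref{u0}, so differentiating $\sigma \mapsto u_0^{-1}(v(\sigma)) \mp \sigma$ and using the two one-sided bounds on $v'$ shows that both functions are monotone starting from $0$, which forces $|v(\sigma)| \le u_0(\sigma)$ for all $\sigma \in [0,d]$. Evaluating at $\sigma = d$ and using the oddness of $u_0$ (a consequence of \eqref{H1} and the uniqueness of $u_0$) yields $|u(x)| = |v(d)| \le u_0(d) = |u_0((s-t)/\sqrt 2)|$, which is \eqref{estimate}.

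The main technical point is that the ODE comparison requires $|v(\sigma)| < M$ throughout, so that $u_0^{-1}(v)$ is well-defined. This follows from the standard bound $\|u\|_\infty \le M$ for bounded solutions of the bistable equation---obtained from the sign of $f$ outside $[-M,M]$ given by \eqref{H2} via a translation-compactness argument applied to a maximizing sequence---together with the strong maximum principle, which forces the strict inequality $|u| < M$ since $u$ cannot be identically $\pm M$, as it vanishes on $\mathcal C$.
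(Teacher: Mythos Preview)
Your approach is essentially the same as the paper's: both reduce the pointwise bound to Modica's gradient estimate combined with the change of variable through $u_0^{-1}$ (you phrase it as integrating $u_0^{-1}(v(\sigma))$ along a segment to the cone; the paper writes $u=u_0(v)$ globally and deduces $|\nabla v|\le 1$, then $|v|\le\operatorname{dist}(\cdot,\mathcal C)$), and both handle the supersolution by direct computation in $(s,t)$-coordinates.

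There is one gap. Your justification for $\|u\|_\infty\le M$ invokes ``the sign of $f$ outside $[-M,M]$ given by \eqref{H2}'', but \eqref{H2} does \emph{not} determine that sign: it only says $G\ge 0$ with equality at $\pm M$, so $G$ may have further zeros beyond $\pm M$ and $f=-G'$ may take either sign there (e.g.\ $G(u)=(1-u^2)^2(4-u^2)^2$ satisfies \eqref{H1}--\eqref{H2} with $M=1$, yet $f$ changes sign on $(1,2)$). So the translation-compactness argument does not yield $|u|\le M$ under these hypotheses alone. The paper instead uses the second assertion in Modica's theorem: if $|u|$ ever reached $M$, then $G(u)=0$ at that point and $u$ would be constant, which is impossible since $u$ vanishes on $\mathcal C$. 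Alternatively, your own ODE comparison is self-improving: as long as $|v(\sigma)|<M$ one has $|u_0^{-1}(v(\sigma))|\le\sigma$ and hence $|v(\sigma)|\le u_0(\sigma)<M$, so by continuation $|v|$ never reaches $M$ along the segment, and this already gives $|u|<M$ globally without a separate argument.

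A smaller remark: your concavity claim for $U$ in the $x^2$-variables would require $u_0$ concave on $(0,\infty)$, i.e.\ $f\ge 0$ on $(0,M)$, which again is not implied by \eqref{H1}--\eqref{H2}. What you actually need---and what is true---is only that the \emph{singular} part of $\Delta U$ on $\{x^2=0\}$ is nonpositive, and this follows just from $\dot u_0>0$: for $2m=2$ the jump in the normal derivative across $\{t=0\}$ is $-\sqrt 2\,\dot u_0(s/\sqrt 2)<0$, while for $2m\ge 4$ the set $\{t=0\}$ has zero capacity so there is no singular contribution. The paper handles this with a capacity cut-off for $2m\ge 4$ and an outer-flux computation for $2m=2$.
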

This proposition is proven in section $4$ using an important 
gradient bound of Modica \cite{M1} for bounded solutions of
($\ref{eq}$). Instead, its last statement 
---~$u_0((s-t)/\sqrt{2})$ being a supersolution in 
$\{s>t\}$, which by the way we will not use in this paper--- 
follows simply from direct computation using
\eqref{eqst}. Since  $|s-t|/\sqrt{2}$ is the distance to the
Simons cone, this last statement corresponds to the well known
fact that the distance function to a hypersurface of zero
mean curvature is superharmonic in each side of the
hypersurface.

The heuristic idea behind the instability result of
Theorem~\ref{uns} is the following. One expects that the 
saddle solution behaves at infinity as the transition
profile $u_0$ placed over the cone $\mathcal C$, that is, as 
$u_0((s-t)/\sqrt 2)$ in \eqref{estimate}. One may expect that 
this, 
combined with the instability of the Simons cone in dimensions 4 and 6,
could lead to the instability of the saddle
solution.  In this paper we see that this idea works in 
dimension 4 thanks to the estimate of Proposition~\ref{prop}.

Indeed, the proof of Theorem~\ref{uns} proceeds as follows.
We prove that the quadratic form $Q$ defined by
($\ref{stable})$ with the solution $u$ replaced by 
the explicit function $u_0 ((s-t)/\sqrt 2)$, 
is negative when $n=4$ for some test function $\xi$. This will imply 
---based on estimate~\eqref{estimate} and the
assumptions on $f$--- that $Q_u$ is also negative for some 
test function, where $u$ is any given solution vanishing on
${\mathcal C}$. That is, $u$ is unstable in dimension $n=4$.

Finally, let us make a comment on results about the Morse index 
of stationary surfaces, i.e., surfaces of zero mean curvature. 
The usual proof of 
the instability of the Simons cone in dimension 4 and 6 (see \cite{G}) 
also leads to its instability outside of every compact set, and hence
to the infinite Morse index property. A precise study of the Morse index of 
stationary surfaces close to the Simons cone is made in~\cite{A2}
through the analysis of intersection numbers.

The paper is organized as follows. In section $2$ we present the
precise statement of the conjecture of De Giorgi and its connections
with the variational properties of solutions to ($\ref{GL}$) and 
with minimal cones. We also recall the result of 
Jerison and Monneau mentioned above. Section $3$ is
devoted to the proof of Theorem $\ref{exis}$ on the existence
of saddle solution. Section $4$ concerns the proof of Proposition
$\ref{prop}$, an important tool towards the proof of our
instability result, Theorem~$\ref{uns}$, which is presented in
section $5$. Finally, in section~$6$ we 
present the asymptotic computations used in the proof of
Theorem~$\ref{uns}$ carried out in every dimension $2m\geq 4$.

\section{A conjecture of De
Giorgi, minimal cones, and saddle solutions}

In 1978 De Giorgi \cite{DG2} raised the following question:

\noindent {\bf Conjecture (De Giorgi \cite{DG2})} 
{\it Let $u\in C^2(\R^n)$ be a solution of
$$
-\Delta u = u-u^3 \qquad\hbox{in }\R^n
$$
such that
$$
\vert u\vert\leq 1\ \quad\hbox{and}\quad \partial_{x_n} u>0
$$ in the whole $\R^n$. Is it true that all level sets
$\{u=\lambda\}$ of $u$ are hyperplanes, at least if $n\le 8\,${\rm
?} }

This conjecture was proved for $n=2$ by Ghoussoub and
Gui~\cite{GG}, and for $n=3$ by Ambrosio and Cabr{\'e}~\cite{AC}. 
For $n \leq 8$, a weaker
version of the conjecture was  proven recently by Savin~\cite{OS}.
Namely, if one further assumes that 
\begin{equation}\label{lim}
 \lim_{x_n\rightarrow\pm \infty}u(x',x_n)=\pm 1\qquad 
\text{for all }x'\in \R^{n-1}
\end{equation}
and $n\leq 8$, then all level sets of $u$ are hyperplanes. 
We emphasize that, in
this result, the limits above are not assumed to be uniform in
$x'\in \R^{n-1}$. 

A related and deep result of Savin \cite{OS} is the following. If
$u$ is a global minimizer of \eqref{GL}
(a local minimizer in the terminology of
\cite{OS}) and $n\leq 7$, then the level sets of $u$ are hyperplanes. 
One expects that $n\leq 7$ is optimal in this result. However,
the existence for $n\geq 8$ of a global minimizer not being 1-D 
is still an open problem. In this direction, 
saddle solutions are natural candidates for
being global minimizers (and not 1-D) in high dimensions. More
precisely, if their minimality hold true in
some dimension, this 
would provide a counter-example to the
conjecture of De Giorgi in one more dimension.
 
Indeed, the connection between the existence of 
certain global minimizers and the
veracity of the conjecture of De Giorgi is established by Jerison
and Monneau in \cite{JM}. Namely, they
prove that if there exists a bounded, even with respect to
each coordinate, global minimizer
in $\R^{n-1}$, then there would be a bounded 
solution $u$ to $(\ref{GL})$, increasing in $x_n$ and with one level set
not being a hyperplane. That is, this would provide a
counter-example to the conjecture in $\R^n$. 
Their precise result is the following.

\begin{theorem}{\bf (Jerison-Monneau \cite{JM})}
Let $G$ satisfy \eqref{H2} with $M=1$ and assume that there
exists a global minimizer $v$ in $\R^{n-1}$ such that $|v|<1$ and
$v$~is even with respect to each coordinate $x_i$, $i=1,\ldots,n-1$. 

Then, for each $\gamma\in(0,\sqrt{2G(v(0))})$,  there exists a
solution $u\in C^2(\R^n)$ of $\Delta u=G'(u)$ in $\R^n$
satisfying $|u|\leq 1$
and $\partial_{x_n}u>0$ in $\R^n$, and such that, for one
$\lambda\in\R$, the set $\{u=\lambda\}$ is not a hyperplane.

Moreover, this solution $u$ is a global minimizer in $\R^n$, it is
even in the first $n-1$ coordinates, and satisfies 
$\partial_{x_n}u(0)=\gamma$ and $u(0)=v(0)$.
\label{jermon}
\end{theorem}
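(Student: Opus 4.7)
The plan is to construct $u$ via a variational procedure on exhausting bounded domains combined with a shooting parameter $\gamma$, whose admissible range is dictated by a one-dimensional Hamiltonian identity for any would-be $1$-D limit. The role of $v$ is twofold: it serves as an obstacle (or ordered sub/supersolution) in the construction, and it forces the constructed solution off the manifold of $1$-D profiles through the normalization $u(0)=v(0)$.

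First, for $\gamma\in(0,\sqrt{2G(v(0))})$ fixed, I would work on a large slab $\Omega_R=\R^{n-1}\times(-R,R)$ (or similar exhausting domain) and minimize $\mathcal{E}(\cdot,\Omega_R)$ in a class of functions $w:\Omega_R\to[-1,1]$ that are even in $x_1,\dots,x_{n-1}$, non-decreasing in $x_n$, satisfy $w=\pm 1$ on $\{x_n=\pm R\}$, and enforce the two-point normalization $w(0)=v(0)$ together with $w_{x_n}(0)=\gamma$. Standard direct methods yield a minimizer $u_R$; evenness and monotonicity can be secured by Steiner symmetrization in $x'$, while elliptic Schauder estimates and the bound $|u_R|\le 1$ give uniform interior $C^{2,\alpha}$ bounds. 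Passing to a local $C^{2,\alpha}$ limit $R\to\infty$ via Arzel\`a--Ascoli produces a bounded solution $u$ on $\R^n$ satisfying $|u|\le 1$, even in the first $n-1$ variables, and with $\partial_{x_n}u\ge 0$. Global minimality of $u$ in the sense of Definition~\ref{def1.1} is inherited from the $u_R$ by a standard cut-and-paste argument.

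Strict monotonicity $\partial_{x_n}u>0$ then follows from the strong maximum principle applied to the linearized equation $-\Delta(\partial_{x_n}u)=f'(u)\partial_{x_n}u$, since the alternative $\partial_{x_n}u\equiv 0$ is incompatible with the surviving normalization $\partial_{x_n}u(0)=\gamma>0$. The crucial non-planarity of some level set follows from a rigidity argument: if all level sets of $u$ were hyperplanes, evenness in $x_1,\dots,x_{n-1}$ would force $u$ to depend only on $x_n$, say $u=U(x_n)$, with $U$ solving $-U''=f(U)$ and $U\to\pm 1$ at $\pm\infty$. Multiplying by $U'$ and using $G'=-f$ together with $G(\pm 1)=0$ yields the Hamiltonian identity $\tfrac12(U')^2=G(U)$, so $\gamma=U'(0)=\sqrt{2G(U(0))}=\sqrt{2G(v(0))}$, contradicting the choice $\gamma<\sqrt{2G(v(0))}$. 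Hence some level set $\{u=\lambda\}$ is not a hyperplane.

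The main obstacle, I expect, is showing that the two-point constraint $w(0)=v(0)$, $\partial_{x_n}w(0)=\gamma$ survives both the minimization (so that the associated Lagrange multipliers vanish and $u_R$ solves the unconstrained Euler--Lagrange equation) and the limit $R\to\infty$. Jerison and Monneau circumvent this by building ordered sub- and supersolutions in the slab using $v$ itself, and then running a sliding method together with a continuity argument in $\gamma$ (essentially a topological degree or intermediate-value-type computation) to realize every $\gamma$ in the admissible interval. Finally, one uses that evenness of $v$ in each variable guarantees $v(0)$ is a genuine extremum, whence $|v(0)|<1$ and $2G(v(0))>0$, so that the admissible interval $(0,\sqrt{2G(v(0))})$ is non-empty; the lower bound $|u|\le 1$ in the conclusion then comes from the strong maximum principle applied to $\pm 1 - u$, using $f(\pm 1)=0$.
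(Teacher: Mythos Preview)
The paper does not prove this theorem; it is quoted as a result of Jerison and Monneau~\cite{JM} and stated without proof in Section~2. There is therefore no ``paper's own proof'' to compare your attempt against.

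On the substance of your sketch: the non-planarity step is correct and is exactly the reason for the strict upper bound $\gamma<\sqrt{2G(v(0))}$ --- evenness in $x'$ forces a $1$-D limit to depend on $x_n$ alone, and the Hamiltonian identity \eqref{hamilt} then pins $\partial_{x_n}u(0)$ to $\sqrt{2G(u(0))}$. However, your main construction --- minimizing $\mathcal{E}$ on slabs subject to the two pointwise constraints $w(0)=v(0)$ and $\partial_{x_n}w(0)=\gamma$ --- does not yield a solution of the unconstrained equation: a constrained minimizer carries Lagrange multipliers (here, Dirac masses at the origin in the Euler--Lagrange equation), and you offer no mechanism for these to vanish. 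You acknowledge this gap yourself and then describe a different route (ordered barriers built from $v$, sliding, and a continuity argument in a shooting parameter), which is much closer to what~\cite{JM} actually does. So your proposal contains the right heuristics in its final paragraph, but the constrained-minimization portion should be discarded rather than patched; and in any case the present paper provides nothing to benchmark against.
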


It is not known, however, if the solution $u$ of the previous
theorem would satisfy $\lim_{x_n\to\pm\infty}u(x',x_n)=\pm 1$ for all
$x'\in\R^n$ as in hypothesis \eqref{lim}.

Note that saddle solutions (as in Definition~\ref{def}) are
even with respect to each coordinate $x_i$, $1\le i\le 2m$,
as in the previous theorem (here we would have $n-1=2m$).

As a first step towards understanding global minimizers with the
properties of~$v$ in Theorem~\ref{jermon}, 
one may study stable solutions ---stability being a necessary 
condition for global minimality.
Classifying all bounded stable solutions to
($\ref{eq}$) is a difficult task. A complete
characterization of stable solutions is only available 
in dimension $n=2$. The
results of \cite{GG} imply that, for all $f$, a nonconstant bounded solution
to ($\ref{eq}$) in $\R^2$ is stable if and only if it is 1-D and
monotone. The proof of this stability result involves a
Liouville-type theorem due to Berestycki, Caffarelli, 
and Nirenberg~\cite{BCN}.
As an immediate corollary of the previous result, we get
that, for $n=2$, if $u$ is radially symmetric then it is unstable, since all
stable solutions must be 1-D. In higher dimensions, Cabr{\'e} and
Capella \cite{CC} have established that, for $n\leq 8$ and all 
$f\in C^1(\R)$, if $u$ is a nonconstant bounded radial solution of
($\ref{eq}$), then $u$ is unstable. The same result holds for
$9\leq n\leq 10$ by a more recent result of Villegas~\cite{Vi}.
On the other hand, for $n\geq 11$, \cite{CC} constructs a polynomial 
$f$ which admits a stable nonconstant bounded radial solution 
$u$ of ($\ref{eq}$). Recent works of Dancer and of Farina \cite{F1,F2,DF}
establish interesting classification results for 
stable and finite Morse index solutions (general solutions,
not only radial, and even unbounded) of supercritical elliptic problems.

The level sets of 1-D solutions and of radial solutions are, 
respectively, hyperplanes and spheres. Instead, 
saddle solutions have the Simons cone as one level set, and
thus their geometry is more involved. In what remains
of this section we explain the results 
on minimal graphs and minimal cones that are relevant in
connection with the conjecture of De Giorgi and with the variational
properties of saddle solutions.

Let $u$ be a bounded solution of ($\ref{GL}$) in all of
$\R^n$ and consider the blow-down family of functions
$\{u_{\varepsilon}\}$ defined by
$u_{\varepsilon}(x)=u(x/\varepsilon)$, for small $\varepsilon$.
This is a solution of the same equation with $f$ replaced by
$\varepsilon^{-2}f$. The study of the behavior of $u_{\varepsilon}$ as
$\varepsilon\rightarrow 0$ leads to some information on
$u$ at infinity.
It was proven by Modica and Mortola \cite{MM1} that
the energy functionals ${\mathcal
E}_{\varepsilon}$ corresponding to $u_{\varepsilon}$
(see \cite{AAC} for details) $\Gamma$-converge to
a multiple of the perimeter functional $\mathcal P$ as
$\varepsilon\rightarrow 0$. Let us explain this with one of its
consequences. If $\{u_{\varepsilon}\}$ is a
sequence of minimizers of ${\mathcal E}_{\varepsilon}$, 
then a subsequence of $u_{\varepsilon}$
converges to a characteristic function $\ds{\chi_{E} -
\chi_{\Omega\setminus E}}$ as $\varepsilon\rightarrow 0$ for which
$\partial E \cap \Omega$ is a minimal hypersurface (minimal in the
variational sense).

Since the level
sets $\{u_{\varepsilon}=\lambda\}$ are rescaled versions of the level sets
$\{u=\lambda\}$ of~$u$, the result of
Modica and Mortola indicates that the level sets
$\{u=\lambda\}$ of~$u$ converge at infinity, 
in some weak sense and after subsequences, to a minimal surface. 
The minimality of $u$, under the hypothese of the conjecture and \eqref{lim},
is guaranteed by a result of \cite{AAC}. Since~$u$ satisfies the monotonicity condition
\begin{equation*}
\partial_{x_n}u >0 \qquad {\rm in} \;\R^n,
\end{equation*}
then each level set of $u$ is the graph of a function from $\R^{n-1}$
to $\R$ along the $x_n$ direction. Therefore, the limiting minimal surface should
be the graph of a function from $\R^{n-1}$ to $\R$.

The problem of classifying all entire minimal graphs was
settled by Simons in \cite{S}. His result establishes that every entire
minimal graph of a function from $\R^k$ to $\R$ is necessarily a hyperplane
for $k\leq 7$. Going back to our problem, we should have that the
limiting minimal graph is a hyperplane (that is, the level sets
of $u$ are in some sense flat at infinity) whenever $k=n-1\leq
7$, i.e., when $n\leq 8$. The conjecture of De Giorgi raises the
question of whether or not each level set itself is a hyperplane,
and not only their limit at infinity and after subsequences.

To prove Simons result on minimal graphs, one first studies
minimal cones; see \cite{G}. Simons \cite{S} 
proved that all  minimal cones of dimension
less or equal than 6 living in $\R^n$ for $n\leq 7$ are
hyperplanes. In addition, he established the existence of 
a singular cone of dimension ${2m-1}$ living in $\R^{2m}$
with zero mean curvature (except at its vertex) and being locally
stable for the area functional if
$2m\geq 8$. This cone, known as Simons cone, is defined by
$$
{\mathcal C} = \{x\in\R^{2m}: x_1^2 + x_2^2 + \cdots + x_m^2=x_{m+1}^2 
+ x_{m+2}^2 + \cdots + x_{2m}^2\}.
$$
One year later, Bombieri, De Giorgi, and Giusti \cite{BGG}
proved that this cone is not only locally stable but actually a 
minimal cone, that is, a minimizer of the area
functional when $2m\geq 8$. Moreover, they proved that there
exists a minimal graph of a smooth function from $\R^k$ to $\R$
which is not a hyperplane when $k\geq 8$.

By our definition, the zero level set of a saddle solution 
coincides with the Simons cone. Hence we expect the
minimality properties of $\mathcal C$ in high dimensions to
play an important role in
the variational properties of saddle solutions.

\section{Existence of saddle solution in $\R^{2m}$}

In this section we prove the existence of a saddle solution
in every even dimension. 
Before this, let us recall some well known facts about the 
regularity of weak solutions. 

Every bounded solution of $-\Delta u=f(u)$ in $\R^{n}$, with $f\in C^1$,
satisfies $u\in C^{2,\alpha} (\R^n)$ for all $0<\alpha <1$. In addition, 
$\vert \nabla u\vert \in L^\infty(\R^n)$.
Indeed, we apply interior $W^{2,p}$ estimates, 
with $p>n$, to the equation 
in every ball $B_2(x)$ of radius $2$ in $\R^n$. We find that
\begin{equation}\label{rad1}
\begin{array}{ccl}
\Vert u \Vert_{C^1(\overline{B}_1(x))} &\leq &
C \Vert u \Vert_{W^{2,p}(B_1(x))}  \\
&\leq & C\left\{ \Vert u \Vert_{L^\infty(B_2(x))} +
\Vert f(u) \Vert_{L^p(B_2(x))}
\right\} \le C
\end{array}
\end{equation}
for some constant $C$ independent of $x\in\R^n$.
Next, we apply $W^{2,p}$ interior estimates to the equations
$-\Delta \, \partial_j u = f'(u)\, \partial_j u$, to obtain
$W^{3,p}$ and hence $C^{2,\alpha}$ estimates for $u$.

To prove the existence of a saddle solution in 
$\R^{2m}=\{x=(x^1,x^2)\in\R^m\times\R^m\}$, we consider the open set
$$
{\mathcal O}:=\{s>t \}=\{|x^1|>|x^2|\}\subset\R^{2m};
$$ 
note that
$$
\partial{\mathcal O}={\mathcal C}.
$$
Using a variational technique we will construct a solution $u$ 
in ${\mathcal O}$ satisfying $u>0$ in ${\mathcal O}$ 
and $u=0$ on ${\mathcal C}=\partial{\mathcal O}$.
Then, since $f$ is odd, by odd
reflection with respect to the cone ${\mathcal C}$ we obtain a 
saddle solution in the whole space. 

Let $B_R$ be the open ball
in $\R^{2m}$ centered at the origin and of radius $R$.
In the proof we will consider the open bounded set
$$
{\mathcal O}_R:={\mathcal O}\cap B_R=\{s>t \text{ and }
 |x|^2=s^2+t^2<R^2\}.
$$
Note that
$$
\partial {\mathcal O}_R=({\mathcal C}\cap \overline{B}_R)\cup
(\partial{B_R}\cap {\mathcal O}).
$$

Even that  in the proof we will not need the following fact, let us point out
that the sets ${\mathcal O_R}$ and ${\mathcal O}$ are domains 
(i.e., open connected sets) in dimensions $2m\geq 4$
(but clearly not in dimension $2$).
Indeed, to see that ${\mathcal O_R}$ is connected, let 
$x=(x^1,x^2)\in {\mathcal O}_R$.
We can arc-connect $x$ within ${\mathcal O}_R$ to the point
$(x^1,0)$, simply using the path $(x^1,\sigma x^2), 0\leq\sigma
\leq 1$. Finally, since $m\geq 2$, the point $(x^1,0)$ can be
arc-connected within $\{p\in\R^m : 0<|p|<R\} \times \{0\}$ 
to the point $(R/2,0,0,\ldots,0)$. Thus, ${\mathcal O}_R$ 
is arc-connected. It follows that ${\mathcal O}$
is also arc-connected.

\begin{proof}[Proof of Theorem $\ref{exis}$] 
With ${\mathcal O}_R$ defined as above, consider the space
$$
\tilde{H}_0^1({\mathcal O}_R)=\{v\in H_0^1({\mathcal O}_R) : 
v=v(s,t) \text{ a.e.}\}
$$ 
of $H_0^1$ functions in the bounded open set ${\mathcal O}_R$ which 
depend only on $s$ and~$t$. 
Equivalently, these are the $H_0^1({\mathcal O}_R)$
functions which are invariant under orthogonal transformations in the 
first $m$ variables and also under orthogonal transformations in the 
second $m$ variables. Thus, $\tilde{H}_0^1({\mathcal O}_R)$
is a weakly closed subspace of $H_0^1({\mathcal O}_R)$. 

Consider the energy functional in ${\mathcal O}_R$,
$$
{\mathcal E}(v,{\mathcal O}_R)=\int_{{\mathcal O}_R}
\left\{\frac{1}{2}|\nabla v|^2+ G(v)\right\}dx
\qquad \text{for } v\in \tilde{H}_0^1({\mathcal O}_R),
$$ 
defined on functions in $\tilde{H}_0^1({\mathcal O}_R)$.
Next we show the existence of a minimizer of the functional
among functions in this space.
Recall that we assume condition \eqref{H2} on $G$, that is,
$$
G\geq 0=G(\pm M) \text{ in } \R \text{ and } G>0 \text{ in } (-M,M).
$$

Since ${\mathcal E}$ is nonnegative, we can take a minimizing 
sequence $\{u_R^k\}$, $k=1,2,\ldots$, of ${\mathcal E}$ in 
$\tilde{H}_0^1({\mathcal O}_R)$. Without loss of generality we
may assume that $0 \leq u_R^k\leq M$. To see this, simply replace the 
minimizing sequence $\{u_R^k\}$ by the sequence 
$\{v_R^k\}$, defined by $v_R^k=\min\{|u_R^k|,M\}\in
\tilde{H}_0^1({\mathcal O}_R)$, which is also a minimizing 
sequence.  Indeed, $\{|u_R^k|\}$ is
a minimizing sequence since $G$ is even; then use that 
$G\geq G(M)$ to conclude that $\{v_R^k\}$ is also minimizing. 

Since $G\geq 0$, we have
$$
\int_{{\mathcal O}_R} |\nabla u_R^k|^2 \;dx \leq 
2{\mathcal E}(u_R^k,{\mathcal O}_R)\leq C
$$ 
for some constant C, and thus there exists a subsequence (denoted again by
$\{u_R^k\}$) such that $u_R^k$ converges weakly in
$H_0^1({\mathcal O}_R)$ to a function 
$u_R\in \tilde{H}_0^1({\mathcal O}_R)$.
Due to the weak convergence we deduce 
$$
\int_{{\mathcal O}_R} |\nabla u_R|^2  \;dx \leq 
\liminf_k\int_{{\mathcal O}_R} |\nabla u_R^k|^2 \;dx .
$$
By Fatou's lemma, we also have that
$$
\int_{{\mathcal O}_R} G(u_R) \;dx \leq \liminf_k 
\int_{{\mathcal O}_R} G(u_R^k) \;dx .
$$
Hence, $u_R$ is a minimizing function in $\tilde{H}_0^1({\mathcal O}_R)$
and $0\leq u_R\leq M$ in ${{\mathcal O}_R}$.

Next, we can consider perturbations $u_R+\xi$ of $u_R$, with
$\xi$ depending only on $s$ and $t$, and with $\xi$ having compact support in
${\mathcal O}_R\cap \{t>0\}=B_R\cap \{0<t<s\}$. In particular $\xi$ 
vanishes in a neighborhood
of $\{t=0\}$. Since equation $(\ref{eqst})$ in the $(s,t)$ variables
is the first variation of
${\mathcal E}(\cdot,{\mathcal O}_R)$ ---recall that ${\mathcal E}$
has the form \eqref{enerst} on $\tilde{H}_0^1$ functions---
and the equation is not singular 
away from $\{s=0\}$ and $\{t=0\}$, we deduce that $u_R$ is a 
solution of $(\ref{eqst})$ in ${\mathcal O}_R\cap \{t>0\}$. 
That is, we have
\begin{equation}\label{eq:t0}
-\Delta u_R=f(u_R)  \quad {\rm in }\; {\mathcal O}_R\cap \{t>0\}.
\end{equation}

We now prove that $u_R$ is also a solution in
all of ${\mathcal O_R}$, that is, also across  
$\{t=0\}$. To see this for dimensions $2m\geq 4$, let 
$\xi_{\varepsilon}$ be a smooth function of $t$ alone being identically $0$
in $\{t<\varepsilon/2\}$ and identically $1$
in $\{t>\varepsilon\}$. Let $v\in C^{\infty}_c({\mathcal O}_R)$,
multiply \eqref{eq:t0} by $v\xi_{\varepsilon}$ and integrate by parts to
obtain
\begin{equation}\label{capa}
\int_{{\mathcal O}_R} \xi_{\varepsilon}\nabla u_R\nabla v  \;dx
+ \int_{{\mathcal O}_R\cap \{t<\varepsilon\}} 
v \nabla u_R\nabla\xi_{\varepsilon}  \;dx
=\int_{{\mathcal O}_R}f(u_R)v\xi_{\varepsilon}  \;dx.
\end{equation}
We conclude by seeing that
the second integral on the
left hand side goes to zero as $\varepsilon\to 0$.
Indeed, by Cauchy-Schwartz inequality,
$$
\left| \int_{{\mathcal O}_R\cap \{t<\varepsilon\}} v\nabla u_R\nabla\xi_{\varepsilon} 
 \;dx \right|^2
\leq C \int_{{\mathcal O}_R\cap \{t<\varepsilon\}} |\nabla u_R|^2  \;dx 
\int_{{\mathcal O}_R\cap \{t<\varepsilon\}} |\nabla\xi_{\varepsilon}|^2 \;dx.
$$
Since $|\nabla\xi_{\varepsilon}|^2 \leq C/\varepsilon^2$,
$|{\mathcal O}_R\cap \{t<\varepsilon\}|\leq C_R \varepsilon^m$,
and $m\geq 2$, the second factor in the previous bound
is bounded independently of $\varepsilon$. At the same time,
the first factor tends to zero as $\varepsilon\to 0$
since $|\nabla u_R|^2$ is integrable in ${\mathcal O}_R$. 

In dimension $2m=2$ the previous proof does not apply and we
argue as follows. We now consider perturbations
$\xi\in \tilde{H}_0^1({\mathcal O}_R)$ which do not vanish on 
$B_R\cap \{t=0\}$. Considering the first variation of
energy and integrating by parts, we find that the 
boundary flux $s^{m-1}t^{m-1}\partial_t u_R=\partial_t u_R$ (here $m-1=0$) 
must be identically 0
on $B_R\cap \{t=0\}$. This implies that
$u_R$ is a solution also across $\{t=0\}$.

We have established the existence of a solution $u_R$ in 
${\mathcal  O}_R=B_R\cap\{s>t\}$ with $0\leq u_R\leq M$.  Considering
the odd reflection of $u_R$ with respect to the Simons cone 
${\mathcal C}$, 
$$
u_R(s,t)=-u_R(t,s),
$$
we obtain a solution in $B_R\setminus \{0\}$. 
Using the same cut-off argument as above, but choosing now
$1-\xi_{\varepsilon}$ to have support in the ball of radius
$\varepsilon$ around $0$, we conclude that $u_R$ is also solution 
around $0$, and hence in all of $B_R$. Here, the cut-off argument
also applies in dimension $2$.

We now wish to pass to the limit in $R$ and obtain a
solution in all of $\R^{2m}$.
For this, let $S>0$ and consider the family $\{u_R\}$, for ${R>S+2}$,
of solutions in $B_{S+2}$. 
Since $|u_R|\leq M$, interior elliptic estimates applied
in balls of radius~$2$ centered at points in $\overline{B}_S$
(as explained in the beginning of this section)
give a uniform $C^{2,\alpha}(\overline{B}_S)$ bound for
$u_R$ (uniform with respect to $R$). For later purposes,
using the argument in \eqref{rad1} for $u_R$, we have
\begin{equation}\label{grad1}
|\nabla u_R|\leq C \quad\text{ in } B_S, \qquad\text{for all }
R>S+2,
\end{equation}
for some constant $C$ independent of $S$ and $R$.
In addition, by the Arzela-Ascoli theorem, a
subsequence of $\{u_{R}\}$ converges in
$C^2(\overline{B}_S)$ to a solution in $B_S$.
Taking $S=1,2,3,\ldots$ and making a Cantor diagonal argument, we
obtain a sequence $u_{R_j}$ converging in $C^2_{\rm loc}(\R^{2m})$ 
to a solution $u\in C^2(\R^{2m})$.

By construction, we have that $u$ is a solution in $\R^{2m}$ depending
only on $s$ and $t$, odd with respect to the Simons cone 
${\mathcal C}$, with $|u|\leq M$ in $\R^{2m}$, and with $u\geq 0$ in $\{s>t\}$. 
Now, using that $f(M)=0$ and $u\not\equiv M$ (since
$u$ vanishes on ${\mathcal C}$), the strong maximum principle
gives that $u<M$ everywhere. As a consequence, we also have $u>-M$.

We claim that $u\not\equiv 0$ in $\R^{2m}$. Then, the
strong maximum principle leads to $u>0$ in $\{s>t\}$,
since $f(0)=0$ and $u\geq 0$ in $\{s>t\}$.
Thus, $u$ has all the properties of a saddle
solution as in Definition~\ref{def}.

To show that $u\not\equiv 0$, let $1<S<R-2$ and $w_R$ be defined as
$$w_R=\xi\min\left\{M, \frac{s-t}{\sqrt{2}}\right\} +(1-\xi)u_R,$$ 
where $\xi$ is a smooth function depending only on $r^2=s^2+t^2$ 
such that $\xi\equiv 1$ 
in $B_{S-1}$ and $\xi\equiv 0$ outside $B_S$. 
We have that $w_R\in \tilde{H}^1_0({\mathcal O}_R)$
satisfies 
\begin{equation}\label{ese}
\left\{
\begin{array}{rcll}
w_R & = & u_R & {\rm in \, } {\mathcal O}_R\setminus {\mathcal O}_S\\
w_R & = & \min\left\{M, \dfrac{s-t}{\sqrt{2}}\right\} & {\rm in \, }
{\mathcal O}_{S-1}.
\end{array}
\right.
\end{equation}
In addition, by \eqref{grad1}, we have
\begin{equation}\label{grad2}
|\nabla w_R| \leq C  \quad\text{ in }
{\mathcal O}_{S} 
\end{equation}
for some constant $C$ independent of $S$ and $R$.

Since $u_R$ minimizes the energy in $\tilde{H}^1_0({\mathcal O}_R)$, 
we have that
${\mathcal E}(u_R,{\mathcal O}_R)\leq {\mathcal E}(w_R,{\mathcal O}_R )$. 
Now, since $w_R = u_R$ in ${\mathcal O}_R\setminus {\mathcal O}_S$, 
we must have, for constants $C$ independent of $S$ and $R$,
\begin{eqnarray*}
&& \hspace{-3mm} \int_{{\mathcal O}_S} 
\left\{ \frac{1}{2}|\nabla u_R|^2+G(u_R)\right\}dx \\
&& \leq  \int_{{\mathcal O}_S} \left\{ \frac{1}{2}|\nabla
w_R|^2+G(w_R)\right\}dx\\
&& =   \int_{{\mathcal O}_{S-1}}
\left\{ \frac{1}{2}|\nabla w_R|^2+G(w_R)\right\} dx+ \int_{{\mathcal O}_S\setminus
{\mathcal O}_{S-1}} \left\{ \frac{1}{2}|\nabla
 w_R|^2+G(w_R)\right\}dx\\
&&  \leq  C\left| {\mathcal O}_{S-1}\cap\left\{\dfrac{s-t}{\sqrt{2}}<M\right\}
\right|+   
C|{\mathcal O}_S\setminus {\mathcal O}_{S-1}|\\
&&\leq C \int_0^{S-1} \left\{(t+\sqrt{2}M)^m-t^m\right\}t^{m-1}\; dt + 
C|B_S\setminus B_{S-1}|\\
&& \leq
 CS^{2m-1}.
\end{eqnarray*}
We have used the uniform gradient bound \eqref{grad2}, the equality
in ${\mathcal O}_{S-1}$ stated in \eqref{ese}, and $G(M)=0$. 
We have also used that $dx$ is equal to $c_ms^{m-1}t^{m-1}ds dt$
to bound the measure of the subset of ${\mathcal O}_{S-1}$,
and that $(t+\sqrt{2}M)^m-t^m\leq C t^{m-1}$ and $S^{2m}-(S-1)^{2m}
\leq CS^{2m-1}$ for $t$ and $S$ larger than $1$.
We now let $R=R_j\rightarrow\infty$ to obtain 
$$
\int_{{\mathcal O}_S} \left\{ \frac{1}{2}|\nabla u|^2+G(u)\right\}dx
\leq CS^{2m-1}
$$
for some constant $C$ independent of $S$. Note that this
bound, after odd reflection with respect to ${\mathcal C}$,
establishes the energy bound 
\begin{equation}\label{enerb}
{\mathcal E}(u,B_S)
\leq C S^{2m-1},
\end{equation}
which is estimate \eqref{est-ener} in the statement of the theorem.

Suppose that $u\equiv 0$. Then the energy bound \eqref{enerb}
would read
$$
c_mG(0)S^{2m}=G(0)|B_S|={\mathcal E}(0,B_S)\leq CS^{2m-1}.
$$ 
This is a contradiction for $S$ large, and thus $u\not\equiv 0$.

Finally, we establish the last statement of the theorem on 
stability under perturbations vanishing on the Simons cone.
We assume hypothesis \eqref{H3} on the concavity of $f$
in $(0,M)$. Since $f(0)=0$, concavity leads to $f'(w)\leq f(w)/w$ for all 
real numbers $w\in (0,M)$. Hence we have
$$
-\Delta u =f(u) \geq f'(u) u \qquad\text{in } {\mathcal O}.
$$
That is, $u$ is a positive supersolution for the linearized
operator $-\Delta - f'(u)$ at $u$ in all of ${\mathcal O}$.
By a simple argument (see the proof of 
Proposition 4.2 of \cite{AAC}),
it follows that the value of the quadratic form $Q_u(\xi)$ 
is nonnegative for all $\xi\in C^1$ with compact support in 
${\mathcal O}$ (and not necessarily depending
only on $s$ and $t$). By an approximation argument, the same
holds for all $\xi\in C^1$ with compact support in 
$\overline{\mathcal O}$ and vanishing on $\partial{\mathcal O}=
{\mathcal C}$. Finally, by odd symmetry with respect
to ${\mathcal C}$, the same is true
for all $C^1$ functions $\xi$ with compact support in $\R^{2m}$
and vanishing on~${\mathcal C}$.
\end{proof}

\section{Pointwise estimate for saddle solutions}

In this section we prove  Proposition $\ref{prop}$ using
an important estimate of Modica \cite{M1} and two elementary lemmas. 
In \cite{M1} Modica proved the following pointwise gradient bound 
for global solutions of semilinear elliptic equations.

\begin{theorem}{\bf (Modica \cite{M1})}\label{Mgrad}
Let $G\in C^2(\R)$ be a nonnegative
function and $u$ be a bounded solution of $\Delta
u-G'(u)=0$ in $\R^n$. Then,
\begin{equation}\label{modica}
\frac{|\nabla u|^2}{2}\leq G(u) \quad {\rm in}\,\R^n.
\end{equation}
In addition, if
$G(u(x_0))=0$ for some  $x_0\in\R^n$, then $u$ is constant.
\end{theorem}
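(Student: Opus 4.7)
The plan is to study the auxiliary ``P-function'' $P(x) := |\nabla u(x)|^2 - 2G(u(x))$ and show $P \leq 0$ on all of $\R^n$; the rigidity statement then drops out of a short Gronwall-type argument. First, interior $W^{2,p}$ and Schauder estimates applied to $\Delta u = G'(u)$ on unit balls (as in \eqref{rad1}) give that $\nabla u$ and $D^2 u$ are bounded on $\R^n$, so $P\in C^1(\R^n)$ is bounded. A direct computation using the PDE yields
\begin{equation*}
\nabla P = 2(D^2 u)\nabla u - 2G'(u)\nabla u,
\qquad
\Delta P = 2|D^2 u|^2 - 2\bigl(G'(u)\bigr)^2.
\end{equation*}
At any point where $\nabla u \neq 0$, Cauchy-Schwarz gives $|D^2 u|^2 \geq |(D^2 u)\nabla u|^2/|\nabla u|^2$; substituting $(D^2 u)\nabla u = \tfrac12\nabla P + G'(u)\nabla u$ and expanding produces the pointwise elliptic inequality
\begin{equation*}
\Delta P - b(x)\cdot \nabla P \;\geq\; \frac{|\nabla P|^2}{2|\nabla u|^2} \;\geq\; 0,
\qquad
b(x) := \frac{2G'(u)}{|\nabla u|^2}\,\nabla u,
\end{equation*}
which is linear elliptic with locally bounded drift on the open set $\{\nabla u \neq 0\}$.

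To rule out $M := \sup_{\R^n}P > 0$, I pick $x_k$ with $P(x_k)\to M$ and translate: $u_k(x) := u(x + x_k)$ is bounded in $C^{2,\alpha}_{\mathrm{loc}}$ by the above gradient/Hessian bounds, so along a subsequence $u_k \to u_\infty$ in $C^2_{\mathrm{loc}}$, where $u_\infty$ is a bounded solution of $\Delta u_\infty = G'(u_\infty)$ whose P-function $P_\infty$ attains its supremum $M$ at $x=0$. Since $P_\infty(0)=M>0$ forces $|\nabla u_\infty(0)|^2 \geq M$, the drift $b$ is bounded near $0$ and the strong maximum principle applied to the elliptic inequality yields $P_\infty \equiv M$ on a neighborhood of $0$. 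The same argument applies at every $x^*$ with $P_\infty(x^*) = M$ (the condition $M>0$ again forces $|\nabla u_\infty(x^*)|>0$), so the level set $\{P_\infty = M\}$ is both open and closed in $\R^n$ and hence equals $\R^n$. Following any gradient flow line $\gamma'(t) = \nabla u_\infty(\gamma(t))$ (defined for all $t$ by the global gradient bound), we get $\tfrac{d}{dt}u_\infty(\gamma(t)) = |\nabla u_\infty|^2 \geq M > 0$, contradicting the boundedness of $u_\infty$. Hence $P\leq 0$ on $\R^n$, which is \eqref{modica}.

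For the rigidity statement, suppose $G(u(x_0))=0$. Then $\nabla u(x_0) = 0$ by \eqref{modica}, and $c := u(x_0)$ is a minimum of the nonnegative $C^2$ function $G$, so $G'(c)=0$ and $G(c+h) \leq Ch^2$ for $|h|$ small. In a neighborhood $U$ of $x_0$ where $|u-c|$ is small, Modica's bound gives $|\nabla(u-c)|^2 \leq 2G(u) \leq C(u-c)^2$, and integrating along each ray from $x_0$ via Gronwall forces $u \equiv c$ on $U$. Since $\{u=c\}$ is therefore open and (trivially) closed, it equals $\R^n$ by connectedness. The delicate step is the middle paragraph: the elliptic inequality for $P$ degenerates precisely at critical points of $u$, so the strong maximum principle cannot be applied directly at an arbitrary putative interior supremum; the translation/compactness trick is used to replace $u$ by a limit $u_\infty$ whose supremum of $P_\infty$ is actually attained, and where the positivity $M>0$ automatically places the maximum in the region $\{\nabla u_\infty \neq 0\}$ on which the inequality is nondegenerate.
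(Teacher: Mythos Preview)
The paper does not actually prove Theorem~\ref{Mgrad}; it is quoted from Modica~\cite{M1} (for $u\in C^3$) and Caffarelli--Garofalo--Seg\`ala~\cite{CGS} (for the general $C^{2,\alpha}$ case), and then invoked as a black box in the proof of Proposition~\ref{prop}. So there is no ``paper's own proof'' to compare against.

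Your argument follows the classical $P$-function route of Modica and is correct. The translation--compactness device to upgrade an approximate supremum of $P$ to an attained one for a limiting solution $u_\infty$ is a clean substitute for Modica's original barrier argument on large balls; once $P_\infty\equiv M>0$, the gradient-flow contradiction is perfectly fine (the flow is global since $\nabla u_\infty$ is globally Lipschitz). The rigidity step via $G(c+h)\le Ch^2$ and Gronwall along rays is also correct: from $|w'(t)|\le\sqrt{2C}\,|w(t)|$ and $w(0)=0$ one gets $(w^2)'\le 2\sqrt{2C}\,w^2$, hence $w\equiv 0$ locally, and the open--closed argument finishes.

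One technical point worth making explicit: the identity $\Delta P=2|D^2u|^2-2(G'(u))^2$ formally requires $u\in C^3$, whereas $G\in C^2$ a~priori only yields $u\in C^{2,\alpha}$---this is precisely the gap between \cite{M1} and \cite{CGS} that the paper mentions right after the statement. It is not a real obstacle: differentiating the equation gives $\Delta(\partial_j u)=G''(u)\,\partial_j u\in L^\infty_{\rm loc}$, so $\partial_j u\in W^{2,p}_{\rm loc}$ for every $p<\infty$, hence $P\in W^{2,n}_{\rm loc}$ and the elliptic inequality for $P$ holds a.e.; the strong maximum principle then applies in its $W^{2,n}_{\rm loc}$ form. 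You should say this, since at present you only claim $P\in C^1$ and then compute $\Delta P$ without justification.
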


In \cite{M1} this bound was proved under the hypothesis 
$u\in C^3(\R^n)$. The result as stated above, which applies to all
solutions ---recall that every weak solution is $C^{2,\alpha}(\R^n)$
since $G\in C^2(\R)$--- was established in \cite{CGS}.
 
Note that $1$-D solutions ---the functions $u_{b,c}$ defined in ($\ref{1d}$)--- 
make ($\ref{modica}$) to be an equality (see Lemma $\ref{lemma1D}$). 
In 1994, Caffarelli, Garofalo, and Seg{\`a}la \cite{CGS}
extended the previous result of Modica to a wider family of
equations which includes operators such as  the p-Laplacian and the
mean curvature operator for graphs. They also established that if
equality holds in $(\ref{modica})$ at some point of $\R^n$, then
$u$ must a 1-D solution.

The following are two auxiliary  lemmas towards Proposition
$\ref{prop}$.
The first one provides a formula for
the distance to the cone ${\mathcal C}$. The second one concerns
increasing solutions of ($\ref{eq}$) in $\R$.

\begin{lemma}\label{lemmadist}For every point $x\in\R^{2m}$, 
the distance from $x$ to
the Simons cone ${\mathcal C}=\{s=t\}$ is given by
$${\rm dist}(x,{\mathcal C})=\frac{|s-t|}{\sqrt{2}}.$$
\end{lemma}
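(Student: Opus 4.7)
The plan is to reduce the distance computation to a one-variable minimization by exploiting the rotational symmetries of $\mathcal{C}$. Fix $x=(x^1,x^2)\in\R^m\times\R^m$ with $s=|x^1|$, $t=|x^2|$. For any candidate point $y=(y^1,y^2)\in\mathcal{C}$, write $r:=|y^1|=|y^2|\geq 0$. Then
\[
|x-y|^2 = |x^1-y^1|^2 + |x^2-y^2|^2 \geq (s-r)^2 + (t-r)^2,
\]
where each inequality follows from Cauchy--Schwarz: $|x^i-y^i|^2 = |x^i|^2 - 2\,x^i\cdot y^i + |y^i|^2 \geq (|x^i|-|y^i|)^2$.

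Next, I would minimize the right-hand side over $r\geq 0$. Setting the derivative to zero gives the critical point $r_* = (s+t)/2$, which lies in $[0,\infty)$ because $s,t\geq 0$. Substituting yields
\[
(s-r_*)^2 + (t-r_*)^2 = \Bigl(\tfrac{s-t}{2}\Bigr)^2 + \Bigl(\tfrac{t-s}{2}\Bigr)^2 = \tfrac{(s-t)^2}{2}.
\]
This proves the lower bound ${\rm dist}(x,\mathcal{C}) \geq |s-t|/\sqrt{2}$.

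For the matching upper bound, I would exhibit an explicit minimizer. If $s,t>0$, take $y^1 = r_*\, x^1/s$ and $y^2 = r_*\, x^2/t$; then $|y^1|=|y^2|=r_*$ so $y\in\mathcal{C}$, and each Cauchy--Schwarz inequality above becomes equality since $y^i$ is a nonnegative multiple of $x^i$. A brief separate check handles the degenerate cases $s=0$ or $t=0$: there the direction of $y^i$ is immaterial (any unit vector works for the vanishing component), and the same value $r_*$ achieves equality. This gives $|x-y| = |s-t|/\sqrt{2}$, completing the proof.

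No step here should be a real obstacle: the argument is a direct reduction to the two-dimensional picture, in which the Simons cone in the $(s,t)$-quadrant is the ray $\{s=t\}$ through the origin, and the formula $|s-t|/\sqrt{2}$ is simply the Euclidean distance from $(s,t)$ to that line (intersected with the first quadrant, which is where the minimizer $r_*=(s+t)/2$ always lies).
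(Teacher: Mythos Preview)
Your proof is correct and follows essentially the same approach as the paper: reduce via Cauchy--Schwarz to minimizing $(s-r)^2+(t-r)^2$ over $r\ge 0$, find the minimizer $r_*=(s+t)/2$, and then exhibit the explicit point on $\mathcal{C}$ achieving the bound (with a separate check when $s=0$ or $t=0$). The only cosmetic difference is that the paper completes the square rather than setting a derivative to zero, and it names a specific minimizer in the degenerate case rather than noting that any direction works.
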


This formula can be found, and also proven rigorously,
using the method of Lagrange multipliers. Next we give an
alternative simple proof of it.

\begin{proof}[Proof of Lemma $\ref{lemmadist}$]
Let $x=(x^1,x^2)\in\R^{2m}\setminus{\mathcal C}$ and
$x_0=(x_0^1,x_0^2)\in{\mathcal C}$. Let $s=|x^1|,t=|x^2|$, and
$s_0=t_0=|x_0^1|=|x_0^2|$. We have
\begin{eqnarray*}
|x-x_0|^2=|x^1-x_0^1|^2+|x^2-x_0^2|^2& = 
& s^2+t^2+2s_0^2-2x^1\cdot x_0^1-2x^2\cdot x_0^2\\
 & \geq & s^2+t^2+2s_0^2-2(s+t)s_0\\
 & = & \frac{(s-t)^2}{2} + \frac{1}{2}{\big (}(s+t)-2s_0 {\big
 )}^2\\
 & \geq & {\Big (}\frac{s-t}{\sqrt{2}}{\Big )}^2.
 \end{eqnarray*}

Next,  given $x\in \R^{2m}$ we show that $x_0\in{\mathcal C}$ can be
 chosen so that the two inequalities above are in fact equalities.
In case that $s>0$ and $t>0$, choose $x_0=(\alpha x^1, \beta x^2)=(\alpha
x_1,\dots,\alpha x_m,\beta x_{m+1},\dots, \beta
 x_{2m})$, where $\alpha$ and $\beta$ are given by
 $\alpha s=\beta t=(s+t)/2$. If either $s$ or $t$ are zero,
say $s>0$ and $t=0$, choose $x_0=( x^1/2, x^1/2)$.
\end{proof}

The proof of the following lemma, which follows from
integrating the ODE $\ddot{u}-G'(u)=0$, can be found in \cite{AC}
---see also a sketch of the proof below, after the statement.

\begin{lemma}\label{lemma1D} Let $G\in C^2(\R)$.
There exists a bounded function $u_0\in C^2(\R)$ satisfying
$$\ddot{u}_0-G'(u_0)=0 \quad \hbox{and} \quad \dot{u}_0>0 
\quad \hbox{ in } \R
$$
if and only if there exist two real numbers $m_1<m_2$ for which
$G$ satisfies
\begin{equation}\label{G1}
G'(m_1)=G'(m_2)=0 \qquad\text{ and }
\end{equation}
\begin{equation}\label{G2}
G>G(m_1)=G(m_2) \quad \hbox{in } (m_1,m_2).
\end{equation}
In such case we have $m_1=\lim_{\tau\rightarrow -\infty} u_0(\tau)$ and 
$m_2=\lim_{\tau\rightarrow +\infty} u_0(\tau)$. Moreover, the solution
$u_0=u_0(\tau)$ is unique up to translations of the independent 
variable $\tau$. 

Adding a
constant to $G$, assume that
\begin{equation}\label{Gzero}
G(m_1)=G(m_2)=0.
\end{equation}
Then, we have that
\begin{equation}\label{hamilt}
\frac{\dot{u}_0^2}{2}= G(u_0) \qquad\text{in }\R .
\end{equation}
If in addition 
\begin{equation}\label{G3}
G''(m_1)\neq 0 \quad\text{and}\quad G''(m_2)\neq 0,
\end{equation}
then
\begin{equation}\label{cresc1D}
0<\dot{u}_0(\tau)\leq C e^{-c|\tau|} \quad \text{in }\R
\end{equation}
for some positive constants $C$ and $c$, and
\begin{equation}\label{int1D}
\int_{-\infty}^{+\infty} \left\{ \frac{1}{2}\dot{u}_0(\tau)^2 +
G(u_0(\tau)) \right\} d\tau <+\infty .
\end{equation} 
\end{lemma}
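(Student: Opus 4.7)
The plan is to treat the ODE $\ddot u_0 = G'(u_0)$ as a one-dimensional conservative system and extract its first integral. For the necessity direction, I would multiply the equation by $\dot u_0$ and integrate in $\tau$ to obtain $\tfrac12 \dot u_0^2 - G(u_0) \equiv C$ for some constant $C$. Since $u_0$ is bounded and strictly increasing, the limits $m_1 := \lim_{\tau\to-\infty} u_0(\tau)$ and $m_2 := \lim_{\tau\to+\infty} u_0(\tau)$ exist and satisfy $m_1<m_2$. Because $u_0$, $\dot u_0$, $\ddot u_0$ are all bounded (the last two by the ODE and differentiation), a standard argument shows $\dot u_0 \to 0$ as $\tau\to\pm\infty$, so the conserved quantity forces $G(m_1)=G(m_2)=-C$, and passing to the limit in $\ddot u_0 = G'(u_0)$ yields $G'(m_1)=G'(m_2)=0$. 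The identity $\dot u_0^2/2 = G(u_0) - G(m_1)>0$ on the interior then gives $G > G(m_1)=G(m_2)$ on $(m_1,m_2)$.

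For the sufficiency direction, after normalizing so that $G(m_1)=G(m_2)=0$, I would build $u_0$ by quadrature. The ansatz $\dot u_0 = \sqrt{2G(u_0)}$ is a separable first-order ODE, equivalent to the implicit relation $\tau = \int_{u_*}^{u_0(\tau)} du/\sqrt{2G(u)}$ after fixing any initial value $u_* \in (m_1,m_2)$ at $\tau=0$. The integrand is smooth and positive on $(m_1,m_2)$, so this defines a smooth strictly increasing $u_0$ which solves the ODE upon differentiation. The only delicate point is that $u_0$ is defined on all of $\R$, which amounts to showing that the improper integrals of $1/\sqrt{G(u)}$ at both endpoints $m_1$ and $m_2$ are infinite. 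Since $G(m_i)=G'(m_i)=0$, Taylor's theorem gives $G(u) \leq C(u-m_i)^2$ near $m_i$, whence $1/\sqrt{G(u)} \geq c/|u-m_i|$ and the required divergence holds. For uniqueness up to translation, any solution of the ODE with $\dot u_0 > 0$ satisfies the same first integral and hence the same separable first-order equation, so two such solutions agree after a shift of $\tau$.

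The Hamiltonian identity $\dot u_0^2/2 = G(u_0)$ is just the first integral under the normalization $G(m_1)=G(m_2)=0$. Under the additional nondegeneracy $G''(m_i) \neq 0$, the facts $G(m_i)=G'(m_i)=0$ and $G>0$ on $(m_1,m_2)$ force $G''(m_i)>0$, and Taylor expanding yields $G(u) = \tfrac12 G''(m_1)(u-m_1)^2 + O(|u-m_1|^3)$ near $m_1$. Substituting into $\dot u_0 = \sqrt{2G(u_0)}$ produces the asymptotic relation $\dot u_0 \sim \sqrt{G''(m_1)}\,(u_0 - m_1)$ as $\tau \to -\infty$, and comparison with the linear equation $\dot v = \sqrt{G''(m_1)}\,v$ upgrades this to $u_0(\tau)-m_1 \leq C e^{c\tau}$ for $\tau \ll 0$, whence $\dot u_0 \leq C e^{c\tau}$. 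The symmetric argument at $m_2$ gives $\dot u_0 \leq C e^{-c\tau}$ for $\tau \gg 0$, establishing~\eqref{cresc1D}. For the integrability~\eqref{int1D}, I would use the Hamiltonian identity to rewrite $\int_{\R}\{\tfrac12 \dot u_0^2 + G(u_0)\}\,d\tau = \int_{\R} \dot u_0^2\,d\tau$ and change variables via $du = \dot u_0\, d\tau$ to obtain $\int_{m_1}^{m_2}\sqrt{2G(u)}\,du$, whose integrand is continuous on $[m_1,m_2]$ and vanishes like $|u-m_i|$ at the endpoints, hence is integrable.

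The hard part will be the careful handling of the endpoint behavior in two places: the global existence of $u_0$ on all of $\R$, which hinges on the divergence of $\int du/\sqrt{G}$ at $m_1$ and $m_2$, and the upgrade of the asymptotic relation $\dot u_0 \sim \sqrt{G''(m_1)}\,(u_0-m_1)$ to a genuine exponential bound. Both rely on the quadratic vanishing of $G$ at the endpoints; the second also requires a comparison/barrier argument with the linearized ODE, which I would carry out either through a Gronwall inequality applied to $u_0-m_1$ or, more transparently, by constructing an explicit supersolution from the linear equation $\dot v = \sqrt{G''(m_1)}\,v$.
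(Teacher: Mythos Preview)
Your proposal is correct and follows essentially the same approach as the paper: both obtain the first integral by multiplying by $\dot u_0$ and integrating, both construct $u_0$ by quadrature (the paper writes $u_0=\phi^{-1}$ with $\phi(\sigma)=\int_{m_0}^{\sigma}dw/\sqrt{2G(w)}$, which is exactly your separation of variables), and both deduce exponential decay from the quadratic vanishing of $G$ at the endpoints. Your presentation is slightly more detailed---you make explicit the divergence of $\int dw/\sqrt{G}$ at $m_i$ needed for global existence, and you phrase the decay via comparison with the linearized equation rather than via the logarithmic blow-up of $\phi$---but these are cosmetic differences, not alternative strategies.
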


Given G satisfying ($\ref{G1}$), ($\ref{G2}$), and \eqref{Gzero},
to construct $u_0$ 
we simply choose any $m_0\in(m_1,m_2)$ and define
$$\phi(\sigma)=\int_{m_0}^{\sigma}\frac{dw}{\sqrt{2(G(w))}} \quad\text{ for }
\sigma\in (m_1,m_2).$$
Then let $u_0:=\phi^{-1}$ be the inverse function of $\phi$.
This formula is found multiplying $\ddot{u}-G'(u)=0$ by $\dot{u}$
and integrating the equation ---which also gives the necessity
of conditions ($\ref{G1}$) and ($\ref{G2}$) for existence.
The above definition of $u_0$ leads automatically to \eqref{hamilt}.

Under the hypothesis
$G''(m_i)\neq 0$, $G$ behaves like a quadratic function
near each $m_i$. Using the expression above for $\phi$, this gives
that $\phi$ blows-up logarithmically at $m_i$, and thus 
its inverse function $u_0$
attains its limits $m_i$ at $\pm\infty$ exponentially. 
From this and identity \eqref{hamilt}, the exponential decay
\eqref{cresc1D} for $\dot{u}_0$ follows, as well as \eqref{int1D}.

Next we prove our pointwise bound.

\begin{proof}[Proof of Proposition $\ref{prop}$]  
Let $u$ be a bounded solution of $-\Delta u=f(u)$ in $\R^{2m}$ that
vanishes on the Simons cone ${\mathcal C}=\{s=t\}$. We wish to show
that
$$|u(x)|\leq \left|u_0\left(\frac{s-t}{\sqrt{2}}\right)\right|
\qquad\text{in }\R^{2m}.$$

First we prove that $|u|<M$. Arguing by
contradiction, assume that $|u|\geq M$ somewhere. Since $u(0)=0$,  
there exists a
point $x_0$ such that $u(x_0)=\pm M$. Then, by Modica gradient bound
($\ref{modica}$) we have that
$|\nabla u(x_0)|^2\leq 2G(u(x_0))=2G(\pm M)=0$.
Therefore $G(u(x_0))=0$ and, by the second part of
Theorem~$\ref{Mgrad}$, $u$ is constant. Since $u=0$ 
on the Simons cone, we must have $u\equiv 0$. This contradicts the assumption
$|u|\geq M>0$ somewhere.

Next, since $|u|<M$, we may write
$$u(x)=u_0(v(x))$$ for some function $v:\R^{2m}\rightarrow\R$, 
where $u_0$ is the $1$-D
solution whose existence is given by Lemma $\ref{lemma1D}$, with
$m_1=-M$ and $m_2=M$, and such that
$u_0(0)=0$. Now, Modica
estimate ($\ref{modica}$) written in terms of $v$ becomes
$$
\frac{1}{2}\dot{u}_0^2(v)|\nabla v|^2\leq
G(u_0(v))\quad\text{ in }\R^{2m}.
$$ 
Since $\dot{u}_0^2/2\equiv G(u_0)$ by \eqref{hamilt}, the
expression above leads to
$$|\nabla v|\leq 1 \quad\text{ in }\R^{2m}.$$

Finally, since $u=0$ on ${\mathcal C}$, we also have $v=0$ on 
${\mathcal C}$. Given $x\in\R^{2m}$, let
 $x_0\in{\mathcal C}$ be such that $|x-x_0|={\rm dist}(x,{\mathcal
  C})$. Then,
$$|v(x)|=|v(x)-v(x_0)|\leq \vectornorm{\nabla v}_{L^{\infty}}|x-x_0|
\leq |x-x_0|={\rm dist}(x, {\mathcal C}).$$
By Lemma $\ref{lemmadist}$, using that $u_0$ is odd since $f$ is
odd and that $u_0$ is increasing, we conclude 
\begin{equation*}
|u(x)|=|u_0(v(x))|=u_0(|v(x)|)\leq u_0({\rm dist}(x,{\mathcal C}))=
\left|u_0{\Big (}\frac{s-t}{\sqrt{2}}{\Big )}\right|,
\end{equation*}
which is the desired bound.

Finally, we prove the last statement of the proposition, that is,
the fact that $u_0((s-t)/\sqrt{2})$
is a supersolution of $-\Delta u=f(u)$ in ${\mathcal O}=\{s>t\}$.
First, by direct computation using equation \eqref{eqst} 
in $(s,t)$ variables for $t>0$ gives that the function is a 
supersolution in $\{s>t>0\}$. In dimension $2m\geq 4$ there is nothing
else to be checked, by a capacity (or cut-off) argument used
as in \eqref{capa}. Instead, in dimension 2,  $u_0((s-t)/\sqrt{2})$ is a
supersolution in ${\mathcal O}$ since the outer flux 
$-\partial_t u_0((s-t)/\sqrt{2})\mid_{t=0}=\dot{u}_0(s/\sqrt{2})/
\sqrt{2}>0$ is positive.
\end{proof}

\section{Instability in dimension $n=4$}

In this section we prove the instability result of Theorem $\ref{uns}$.
For this, we establish that
the function $u_0((s-t)/\sqrt{2})$ is unstable in dimension $4$
in the sense that the second variation of energy $Q$ at
$u_0((s-t)/\sqrt{2})$ is negative for some test
function $\xi$ depending only on $s$ and $t$. 
Our proof also gives its instability outside of
every compact set.
Even that $u_0((s-t)/\sqrt{2})$
is not a solution of the equation, we define
the quadratic form
\begin{equation}\label{Qu0}
Q_{u_0}(\xi):=\int_{\R^{2m}}\left\{|\nabla\xi(x)|^2-
f'\left( u_0( (s-t)/\sqrt{2}) \right)
\xi^2(x)\right\}dx ,
\end{equation}
where there is some abuse of notation in writting $Q_{u_0}$ since
by $u_0$ we really mean $u_0((s-t)/\sqrt{2})$.

The key point of the proof is that $Q_{u_0}$ not being nonnegative 
definite leads to the same property for $Q_{u}$, where $u$ is
any bounded solution that vanishes on the Simons cone. This fact will follow
from our main pointwise bound of Proposition~\ref{prop}.

For the proof it is useful to consider the variables
$$
\left\{ \begin{array}{rcl}
y&=&{\ds \frac{s+t}{\sqrt{2}}}\\
z&=&{\ds \frac{s-t}{\sqrt{2}}} ,
\end{array}
\right.
$$
which satisfy $-y\leq z\leq y$.

Recall that a  bounded solution $u$  of $-\Delta u=f(u)$ in $\R^{2m}$
is stable provided
$$Q_u(\xi)=\int_{\R^{2m}}\left\{|\nabla\xi|^2-f'(u)\xi^2\right\}dx 
\geq 0\qquad
\text{ for all } \xi\in C_c^{\infty}(\R^{2m}).$$
If $v$ is a function depending only on $s$ and $t$, the quadratic form
$Q_v(\xi)$ acting on perturbations of the form $\xi=\xi(s,t)$ becomes
$$c_m Q_v(\xi)= \int_{\{s> 0, t> 0\}} s^{m-1}t^{m-1}\left\{\xi_s^2+\xi_t^2-
f'(v)\xi^2\right\}dsdt ,$$
where $c_m>0$ is a constant depending only on $m$.
We can further change to variables $(y,z)$ and obtain, 
for a different constant $c_m>0$, 
\begin{equation}\label{five2}
c_m Q_v(\xi)=\int_{\{-y<z<y\}}(y^2-z^2)^{m-1}\left\{\xi_y^2+\xi_z^2-
f'(v)\xi^2\right\}dydz .
\end{equation}

Given the definition of the variables $y$ and $z$,
a function $\xi=\xi(y,z)$ has compact support in $\R^{2m}$ if and only if 
$\xi(y,z)$ vanishes for $y$ large enough.

\begin{proof}[Proof of Theorem $\ref{uns}$] 
Let $u$ be a bounded
solution of $-\Delta u=f(u)$ in $\R^{2m}$ vanishing on the
Simons cone ${\mathcal C}=\{s=t\}$.
By Proposition~\ref{prop}, we know that 
$$
|u(x)|\leq |u_0(z)|
\qquad\text{ in all of } \R^{2m}.
$$ 
This leads to
$f'(|u(x)|)\geq f'(|u_0(z)|)$ for all $x\in\R^{2m}$, since we
assume $f$ to be concave in $(0,M)$. Now, since $f'$ is even,
we deduce that
$$
f'(u(x))\geq f'(u_0(z))\qquad\text{ for all  } x\in\R^{2m}.
$$ 
Therefore, since $Q_{u_0}$ is defined by \eqref{Qu0} and
$(s-t)/\sqrt{2}=z$, we conclude
\begin{equation}\label{ineqQ}
Q_u(\xi)\leq Q_{u_0}(\xi)\qquad\text{ for all } 
\xi\in C^{\infty}_c(\R^{2m}).
\end{equation}

It follows that, in order to prove that $u$ is unstable, it
suffices to find a smooth function $\xi$ with compact support 
in $\R^{2m}$ for which $Q_{u_0}(\xi)< 0$. This is an easier task since $u_0(z)$
is explicit. Note also that, by an
approximation argument, it suffices to find a Lipschitz function 
$\xi$, not necessarily smooth,  with compact support in $\R^{2m}$ and 
for which $Q_{u_0}(\xi)< 0$.

Expression \eqref{five2} with $v(x)=u_0(z)$ reads
\begin{equation*}
c_m Q_{u_0}(\xi)=\int_{\{-y<z<y\}}(y^2-z^2)^{m-1}\left\{\xi_y^2
+\xi_z^2-f'(u_0(z))\xi^2\right\}dydz
\end{equation*}
for all $\xi=\xi(y,z)$ with compact support in $\R^{2m}$. 
We take now $\xi$ of separate variables, that is, of
the form $$\xi(y,z)=\phi(y)\psi(z).$$ 
For $\xi$ to have 
compact support in $\R^{2m}$ it suffices that $\phi$ has compact support 
in $y\in (0,+\infty)$ (with no requirement on the support of $\psi$). 
Note also that $\xi$ is a Lipschitz function
of $x\in\R^{2m}$ if $\phi$ and $\psi$ are Lipschitz. 
However, even if $\phi$ and $\psi$ are smooth, $\xi$ will not be in
general better than Lipschitz ---to be smooth it would be necessary
that the normal derivatives of $\xi$ vanish
at $s=0$ and $t=0$ (i.e., at $z=\pm y$).

Since $\xi_y^2+\xi_z^2=\phi_y^2\psi^2+
\phi^2\psi_z^2$, we have
\begin{equation}\label{eqsep}
c_m Q_{u_0}(\xi)=\int_{\{-y<z<y\}} (y^2-z^2)^{m-1}\{\phi_y^2\psi^2+
\phi^2\psi_z^2-f'(u_0(z))\phi^2\psi^2\}dydz .
\end{equation}

Choose $$\psi(z)=\dot{u}_0(z).$$ We now let
$2m=4$ and thus $m-1=1$. In the following computations,
we first integrate by parts the term $\{(y^2-z^2)\phi^2\psi_z\}\psi_z$
with respect to $z$ (note that here we obtain no boundary terms), 
and later we write the term
$2z\phi^2\psi\psi_z$ as $\phi^2z(\psi^2)_z$ and we integrate it 
by parts with respect to $z$. Thus,
\begin{eqnarray*}
& & \hspace{-10mm} 
c_m Q_{u_0}(\xi) =  \int_{\{-y<z<y\}}
(y^2-z^2)\{\phi_y^2\psi^2+
\phi^2\psi_z^2-f'(u_0(z))\phi^2\psi^2\}dydz \\
& = & \int_{\{-y<z<y\}} (y^2-z^2)\phi_y^2\psi^2dydz+ 
\int_{\{-y<z<y\}}
2z\phi^2\psi\psi_zdydz\\
& & \qquad\qquad -\int_{\{-y<z<y\}}
(y^2-z^2)\phi^2\psi\{\psi_{zz}+f'(u_0(z))\psi\}dydz\\
& = & \int_{\{-y<z<y\}} (y^2-z^2)\phi_y^2\psi^2dydz - 
\int_{\{-y<z<y\}}
\phi^2\psi^2dydz\\
&  & + \int_{0}^{+\infty} \phi^2(y)[z\psi^2(z)]_{-y}^{y}dy\\
& = & \int_{\{-y<z<y\}} (y^2-z^2)\phi_y^2\psi^2dydz - 
\int_{\{-y<z<y\}} \phi^2\psi^2dydz \\
&  & + \int_{0}^{+\infty} \phi^2(y) 2y \psi^2(y)dy\\
& \leq & \int_{\{-y<z<y\}} y^2\phi_y^2\psi^2dydz - 
\int_{\{-y<z<y\}} \phi^2\psi^2dydz +
\int_{0}^{+\infty} \phi^2(y) 2y\psi^2(y) dy,
\end{eqnarray*}
where we have used that $\psi=\dot{u}_0$ is an even function and
a solution to the
linearized $1$-D problem ${\psi}_{zz}+f'(u_0(z))\psi=0$.

For $a>1$, a constant that we will make tend to infinity, let 
$\eta=\eta(\rho)$ be a Lipschitz function of $\rho:=y/a$ with compact support 
$[\rho_1,\rho_2]\subset (0,+\infty)$. Let us denote by
$$
\phi(y)=\phi_a(y)=\eta(y/a) \qquad\text{and}\qquad
\xi_a(y,z)=\phi_a(y)\dot{u}_0(z)=\eta(y/a)\dot{u}_0(z)
$$
the functions named $\phi$ and $\xi$ above.
In the last bound for $Q_{u_0}$, we make the change 
$y=a\rho$, $dy=ad\rho$, and we use that $\psi=\dot{u}_0$ is decreasing
in~$(0,+\infty)$. We obtain
\begin{eqnarray*}
& & \hspace{-10mm} c_m Q_{u_0}(\xi_a)\leq \\
& \leq & \int_{\{-y<z<y\}} y^2\phi_y^2\psi^2dydz - 
\int_{\{-y<z<y\}}
\phi^2\psi^2dydz + \int_{0}^{+\infty} \phi^2(y)2y\psi^2(y)dy\\
& \leq &  \int_{\{-a\rho<z<a\rho\}} a^3\rho^2\frac{\eta_{\rho}^2}{a^2}\psi^2d\rho
dz - \int_{\{-a\rho<z<a\rho\}}
a\eta^2\psi^2d\rho dz\\
& & \qquad\qquad  + \int_{\rho_1}^{\rho_2} a\eta^2(\rho)2a\rho\psi^2(a\rho)d\rho\\
& = & a\left \{  \int_{0}^{+\infty}
\rho^2\left\{ \eta_{\rho}^2-\frac{\eta^2}{\rho^2}\right\} \left\{\int_{-a\rho}^{a\rho}
\dot{u_0}^2dz \right\} d\rho  +
2a\rho_2\dot{u}_0^2(a\rho_1)\int_{0}^{+\infty}\eta^2d\rho\right\}.
\end{eqnarray*}
Dividing by $a$, this leads to
\begin{equation*}
\frac{c_mQ_{u_0} (\xi_a)}{a}\leq \int_{0}^{+\infty}
\rho^2\left\{\eta_{\rho}^2-\frac{\eta^2}{\rho^2}\right\}
\left\{\int_{-a\rho}^{a\rho} \dot{u}_0^2dz \right\}d\rho  +
2a\rho_2\dot{u}_0^2(a\rho_1) \int_{0}^{+\infty}\eta^2d\rho .
\end{equation*}

Since $f$ is a concave function in $(0,M)$
with $f(0)=f(M)=0$, we have $f\geq 0$ in $(0,M)$. In addition,
since $f\not\equiv 0$ in $(0,M)$ by \eqref{H2}, we must have
$f>0$ in $(0,M)$ by concavity. Now, $f$ being concave and positive in $(0,M)$
and with $f(M)=0$, we deduce that $f'(M)<0$. Hence, hypothesis
\eqref{G3} in Lemma~\ref{lemma1D}, $G''(\pm M)>0$ is satisfied. Thus by
($\ref{cresc1D}$), we conclude that
$$\lim_{a\rightarrow +\infty}a\dot{u}_0^2(a\rho_1)=0.$$
Therefore, letting $a\to +\infty$ in the last bound for $Q_{u_0}$ we obtain
\begin{equation}\label{concl}
\limsup_{a\rightarrow +\infty} \frac{c_mQ_{u_0} (\xi_a)}{a}\leq
\left\{\int_{-\infty}^{+\infty} \dot{u}_0^2(z)dz \right\}
\int_{0}^{+\infty}
 \rho^2\left\{\eta_{\rho}^2-\frac{\eta^2}{\rho^2}\right\}d\rho.  
\end{equation}
By $(\ref{int1D})$ in Lemma $\ref{lemma1D}$,
$\dot{u}_0^2$ is integrable in $(-\infty,\infty)$. 
Thus, by \eqref{ineqQ} and the comments after it,
the proof of the instability of the solution $u$ will be finished
if there exists a Lipschitz function $\eta=\eta(\rho)$ 
with compact support in $(0,+\infty)$ for which the second
integral in \eqref{concl} is negative.

Arguing by contradiction, assume the contrary and therefore that 
\begin{equation}\label{rho}
\int_{0}^{+\infty} \rho^2\frac{\eta^2}{\rho^2}d\rho\leq
\int_{0}^{+\infty}\rho^2\eta_{\rho}^2d\rho, 
\end{equation}
for every Lipschitz function $\eta=\eta(\rho)$ 
with compact support in $(0,+\infty)$. The requirement that
$\eta$ vanishes in a neighborhood of $0$ can be removed
by simply cutting-off $\eta$ in $(0,\varepsilon)$ and
letting $\varepsilon\to 0$.
The integrals in \eqref{rho} can be seen as integrals in
$\R^3$ of radial functions, that is, functions of the radius $\rho$.  
Hardy's inequality in $\R^3$ states that 
$$
\frac{(3-2)^2}{4}\int_{\R^3} \frac{\eta^2}{|x|^2}dx \leq 
\int_{\R^3}|\nabla
 \eta|^2dx 
$$ 
holds for every Lipschitz function $\eta$ with compact support in
$\R^3$, and that the  constant 
$(3-2)^2/4=1/4$ is the best possible even when the inequality is
considered only among radial functions. Hence, since
$$
1>\frac{1}{4}=\frac{(3-2)^2}{4},
$$ 
($\ref{rho}$) leads to a contradiction, and this finishes the proof
of instability.

The following is a direct way (without using Hardy's inequality)  
to see that the second integral in \eqref{concl} is negative 
for some Lipschitz function $\eta$ with 
compact support in $(0,\infty)$. 
For $\alpha >0$ and $0<2\rho_1<1<\rho_2$, let
\begin{equation*}
\eta(\rho)= \left\{
\begin{array}{ll}
(1-\rho_2^{-\alpha})\rho_1^{-1}(\rho-\rho_1) & \text{ for } \rho_1\leq \rho \leq 2\rho_1\\
1-\rho_2^{-\alpha} & \text{ for }2\rho_1\leq \rho \leq 1\\
\rho^{-\alpha}-\rho_2^{-\alpha} & \text{ for } 1\leq \rho \leq \rho_2\\
0 & \text{ otherwise},
\end{array}
\right.
\end{equation*}
a Lipschitz function with compact support $[\rho_1,\rho_2]$.
We simply compute the second integral in \eqref{concl} and find
\begin{eqnarray*}
& & \hspace{-10mm}  \int_{0}^{+\infty}
 \rho^2\left\{\eta_{\rho}^2-\frac{\eta^2}{\rho^2}\right\}d\rho \leq \\
& \leq &
 \int_{\rho_1}^{2\rho_1}\rho^2 (1-\rho_2^{-\alpha})^2 \rho_1^{-2} d\rho + 
\int_1^{+\infty} \alpha^2\rho^{-2\alpha}d\rho
- \int_1^{\rho_2} (\rho^{-\alpha}-\rho_2^{-\alpha})^2 d\rho.
\end{eqnarray*}
Choosing $1/2<\alpha<1$, as $\rho_2\to\infty$ the difference of the
last two integrals converges to a negative number,
since $\alpha^2<1$. Since the first of the three last integrals is
bounded by $3\rho_1$, we conclude that the above expression is
negative by chossing $\rho_2$ large enough and then $\rho_1$
small enough.

The previous proof of instability also leads to the instability
outside of every compact set ---and thus to the infinite Morse
index property of $u$. Indeed, choosing $\rho_1$ and
$\rho_2$ (and thus $\eta$) as above, we 
consider the corresponding function $\xi_a$ for $a>1$. Now,
\eqref{ineqQ} and \eqref{concl} lead to 
$Q_u (\xi_a)\leq Q_{u_0}(\xi_a) <0$ for $a$ large enough. 
Thus, the Lipschitz function $\xi_{a}$ makes $Q_u$
negative for $a$ large, and has compact support contained in 
$\{a\rho_1 \leq (s+t)/\sqrt{2}\leq a\rho_2\}$. By approximation,
the same is true for a function $\xi$ of class $C^1$, not
only Lipschitz.
Hence, given any compact set $K$ of $\R^{4}$, by taking $a$ 
large enough we conclude that $u$ is unstable outside $K$,
as stated in the theorem.

{From} the instability outside every compact set, it follows
that $u$ has infinite Morse index in the sense of 
Definition~\ref{def1.1}(c). Indeed, let $X_k$ be a subspace
of $C^1_c(\R^{4})$ of dimension $k$, generated by functions
$\xi_1,\ldots,\xi_k$, and with $Q_u(\xi)<0$ for all 
$\xi\in X_k\setminus\{0\}$.
Let $K$ be a compact set containing the support of all
the functions $\xi_1,\ldots,\xi_k$. Since $u$ is unstable
outside $K$, there is a $C^1$ function $\xi_{k+1}$ with 
compact support
in $\R^{4}\setminus K$ for which $Q_u(\xi_{k+1})<0$.
Since $\xi_{k+1}$ has disjoint support with each of the functions
$\xi_1,\ldots,\xi_k$, it follows that $\xi_1,\ldots,\xi_k,\xi_{k+1}$
are linearly independent and that $Q_u(a_1\xi_1+\cdots+a_{k+1}\xi_{k+1})
=Q_u(a_1\xi_1+\cdots+a_{k}\xi_{k})+Q_u(a_{k+1}\xi_{k+1})<0$ for every
nonzero linear combination $a_1\xi_1+\cdots+a_{k+1}\xi_{k+1}$ of them.
We conclude that $u$ has infinite Morse index.
\end{proof}

\section{Asymptotic stability of $u_0(z)$ in  dimensions $2m\geq 6$}

In this section we carry out, in all dimensions $n=2m\geq 4$, the
asymptotic analysis done for $n=4$ in the proof of Theorem $\ref{uns}$. 
We will see that the argument does not lead to the instability of saddle solutions
in dimensions $n=2m\geq 6$. Indeed, we will show that $u_0(z)=u_0((s-t)/\sqrt{2})$ 
is, in every dimension $n=2m\geq 6$ and only in some weak sense, 
asymptotically stable with respect to 
perturbations $\xi(y,z)$ with separate variables. 
Although this applies in dimension $6$, note that $Q_u\leq Q_{u_0}$ is only an
inequality and thus, $u_0$ may be asymptotically stable (or even could be stable)
and at the same time the solution $u$ be unstable. Indeed, a more recent result of us
\cite{CT} establishes that saddle
solutions in dimension $6$ are unstable.

Recall that by \eqref{eqsep}, the second variation of energy at $u_0(z)$
applied to test functions $\xi(y,z)=\phi(y)\psi(z)$ has the form 
$$
c_m Q_{u_0}(\xi)=\int_{\{-y<z<y\}} (y^2-z^2)^{m-1}\{\phi_y^2\psi^2+
\phi^2\psi_z^2-f'(u_0(z))\phi^2\psi^2\}dydz.
$$

We choose, as in Theorem $\ref{uns}$, $\psi(z)=\dot{u}_0(z)$ 
---which is a solution of the linearized problem in $z$ and thus 
it should be the most unstable perturbation in the $z$ variable. 
We proceed as in
the proof of Theorem~\ref{uns}, integrating by parts the term 
$\{(y^2-z^2)^{m-1}\phi^2\psi_z\}\psi_z$
with respect to $z$, and later re-writting the term
$(y^2-z^2)^{m-2}2z\phi^2\psi\psi_z$ as 
$(y^2-z^2)^{m-2}\phi^2z(\psi^2)_z$, and integrating it 
by parts with respect to $z$. Now there are no boundary terms when we
integrate by parts since $m-2\geq 1$. We obtain
\begin{eqnarray*}
c_m Q_{u_0}(\xi) & = & \int_{\{-y<z<y\}}
(y^2-z^2)^{m-1}\{\phi_y^2\psi^2+
\phi^2\psi_z^2-f'(u_0(z))\phi^2\psi^2\}dydz \\
 & = &  \int_{\{-y<z<y\}} (y^2-z^2)^{m-1}\{\phi_y^2\psi^2-
\phi^2\psi{\big (}\psi_{zz}+f'(u_0(z))\psi{\big )}\}dydz\\
& & \qquad\qquad + \int_{\{-y<z<y\}} 
(m-1)(y^2-z^2)^{m-2}2z\phi^2\psi\psi_zdydz 
\\
& = &  \int_{\{-y<z<y\}} (y^2-z^2)^{m-1}\psi^2\left\{\phi_y^2 -
\frac{m-1}{y^2-z^2}\phi^2\right\}dydz\\
& & \qquad\qquad + \int_{\{-y<z<y\}}
(m-1)(m-2)(y^2-z^2)^{m-3}2z^2\phi^2\psi^2dydz ,
\end{eqnarray*}
where we have used that $\psi=\dot{u}_0$ is a solution to the
linearized $1$-D problem $\psi_{zz}+f'(u_0(z))\psi =0$.

As before, let $a>1$ and $\phi(y)=\eta(y/a)$, where $\eta=\eta(\rho)$ 
is a function
of $\rho=y/a$ with compact support in $(0,+\infty)$. Let
$$
\xi_a(y,z)=\xi(y,z)=\eta(y/a)\dot{u}_0(z).
$$
Since $y=a\rho$,
$dy=ad\rho$, we have that
\begin{eqnarray*}
& & \hspace{-6mm} c_m Q_{u_0}(\xi_a) = \\
& &  = \int_{\{-a\rho\leq z \leq a\rho\}}
a^{2m-3}\rho^{2m-2}\left(1-\frac{z^2}{a^2\rho^2}\right)^{m-1}\dot{u}_0^2
\left\{\eta_y^2
-\frac{m-1}{\rho^2(1-\frac{z^2}{a^2\rho^2})}\eta^2\right\}d\rho dz \\
& & \quad + \int_{\{-a\rho\leq z \leq a\rho\}}
2(m-1)(m-2)a^{2m-5}\rho^{2m-6}\eta^2\dot{u}_0^2z^2\left(1-
\frac{z^2}{a^2\rho^2}\right)^{m-3}d\rho dz.
\end{eqnarray*}

Dividing by $a^{2m-3}$ we deduce
\begin{eqnarray*}
&&\frac{c_mQ_{u_0}(\xi_a)}{a^{2m-3}} =  \\
&&\qquad =\int_{\{-a\rho\leq z\leq a\rho\}}
\rho^{2m-2}\left(1-\frac{z^2}{a^2\rho^2}\right)^{m-1}
\dot{u}_0^2\left\{\eta_y^2-
\frac{m-1}{\rho^2(1-\frac{z^2}{a^2\rho^2})}\eta^2\right\}d\rho dz \\
&&\qquad \quad +\int_{\{-a\rho\leq z \leq a\rho\}}
\frac{2(m-1)(m-2)}{a^{2}}\rho^{2m-6}\eta^2\dot{u}_0^2z^2\left(1-
\frac{z^2}{a^2\rho^2}\right)^{m-3}d\rho dz.
\end{eqnarray*}
Since $0\leq 1-z^2/(a^2\rho^2)\leq 1$, $m\geq 3$, and $z^2\dot{u}_0^2(z)$ is
integrable in $(-\infty,+\infty)$, as $a\to\infty$ the second integral 
tends to zero
and we are left with the expression
$$ 
\limsup_{a\to +\infty} \frac{c_mQ_{u_0}(\xi_a)}{a^{2m-3}} =
\left\{\int_{-\infty}^{+\infty} \dot{u}_0^2 dz\right\}
\int_{0}^{+\infty}
\rho^{2m-2}\left\{\eta_y^2-\frac{m-1}{\rho^2}\eta^2\right\}d\rho .
$$ 
According to Hardy's inequality for radial functions in $\R^{2m-1}$,
the last integral in $\rho$ 
is nonnegative for all $\eta=\eta(\rho)$ with compact support if and only if
$$
m-1\leq \frac{(2m-3)^2}{4}.
$$
When $2m\geq 6$ this inequality is true (it is even strict) and we conclude
some kind of asymptotic stability for $u_0(z)$. 

\vspace{1em}

{\sc Acknowledgment.} The first author thanks Regis Monneau for
interesting discussions and ideas on the subject of this paper.


\begin{thebibliography}{99}

\itemsep=\smallskipamount


\bibitem{ABG}
S. Alama, L. Bronsard, and C. Gui, {\it Stationary layered solutions in 
$\R^2$ for an Allen-Cahn system with multiple well potential,}
Calc. Var. Partial Differential Equations {\bf 5} (1997), 359--390. 

\bibitem{AAC}
G. Alberti, L. Ambrosio, and X. Cabr\'e, {\em On a long-standing
conjecture of E. De Giorgi: symmetry in 3D for general
nonlinearities and a local minimality property,} Acta Appl. Math.
{\bf 65} (2001), 9--33.


\bibitem{A2}
H. Alencar, A. Barros, O. Palmas, J. G. Reyes, and W. Santos, 
{\it $\rm {O} (m) \times \rm {O} (n)$-invariant minimal hypersurfaces in 
${\R}^{m + n}$,} Ann. Global Anal. Geom. {\bf 27} (2005), 179--199.

\bibitem{ACM}
F. Alessio, A. Calamai, and P. Montecchiari, {\it Saddle-type solutions 
for a class of semilinear elliptic equations,} 
Adv. Differential Equations {\bf 12} (2007), 361--380.


\bibitem{AC}
L. Ambrosio and X. Cabr\'e, {\em Entire solutions of semilinear
elliptic equations in $\R^3$ and a conjecture of De Giorgi,}
Journal Amer. Math. Soc. {\bf 13} (2000), 725--739.


\bibitem{BCN}
H. Berestycki, L. Caffarelli, and L. Nirenberg, {\em Further
qualitative properties for elliptic equations in unbounded
domains,} Ann. Scuola Norm. Sup. Pisa Cl. Sci. {\bf 25} (1997),
69--94.

\bibitem{BGG}
E. Bombieri, E. De Giorgi, and E. Giusti, {\em Minimal cones and
the Bernstein problem,} Inv. Math. {\bf 7} (1969), 243--268.

\bibitem{CC}
X. Cabr{\' e} and A. Capella, {\em On the stability of radial
solutions of semilinear elliptic equations in all of $\R^n$,} C.
R. Acad. Sci. Paris, Ser. I {\bf 338} (2004), 769--774.

\bibitem{CT}
X. Cabr{\'e} and J. Terra, {\em Qualitative properties 
of saddle-shaped solutions to bistable diffusion equations}, forthcoming.

\bibitem{CGS}
L. Caffarelli, N. Garofalo, and F. Seg{\`a}la, {\em A gradient
bound for entire solutions of quasi-linear equations and its
consequences,} Comm. Pure and Appl. Math. {\bf 47} (1994),
1457--1473.

\bibitem{DF}
E.N. Dancer and A. Farina, {\it On the classification of 
solutions of $−\Delta u = e^u$ on $\R^N$:
stability outside a compact set and applications,}
preprint.

\bibitem{DFP}
H. Dang, P. C. Fife, and L. A. Peletier, {\em Saddle solutions of
the bistable diffusion equation,} Z. Angew Math. Phys. {\bf 43}
(1992), 984--998.


\bibitem{DG2}
E. De Giorgi, {\em Convergence problems for functionals and
operators,} Proc. Int. Meeting on Recent Methods in Nonlinear
Analysis (Rome, 1978), 131--188, Pitagora, Bologna (1979).

\bibitem{F1}
A. Farina, {\it On the classification of solutions of the 
Lane-Emden equation on unbounded domains of $\R\sp N$,}  
J. Math. Pures Appl. {\bf 87}  (2007), 537--561.

\bibitem{F2}
A. Farina, {\it Stable solutions of $-\Delta u=e\sp u$ on 
$\R\sp N$,}  C. R. Math. Acad. Sci. Paris {\bf 345}  (2007), 63--66.

\bibitem{GG}
N. Ghoussoub and C. Gui, {\em On a conjecture of De Giorgi and
some related problems,} Math. Ann. {\bf 311} (1998), 481--491.


\bibitem{G}
E. Giusti, {\em Minimal Surfaces and Functions of Bounded
Variation,} Birkh\"auser Verlag, Basel-Boston, 1984.

\bibitem{JM}
D. Jerison and R. Monneau, {\em Towards a counter-example to a conjecture 
of De Giorgi in high dimensions,}  
Ann. Mat. Pura Appl. {\bf 183}  (2004), 439--467.

\bibitem{M1}
L. Modica, {\em A gradient bound and a Liouville theorem for
nonlinear Poisson equations,} Comm. Pure Appl. Math. {\bf 38}
(1985), 679--684.

\bibitem{MM1}
L. Modica and S. Mortola, {\em Un esempio di $\Gamma
\sp{-}$-convergenza,} Boll. Un. Mat. Ital. B {\bf 14} (1977),
285--299.

\bibitem{OS}
O. Savin, {\em Regularity of flat level sets in phase transitions,}
to appear in Ann. of Math.

\bibitem{Sc}
M. Schatzman, {\em On the stability of the saddle solution of
Allen-Cahn's equation,} Proc. Roy. Soc. Edinburgh Sect. A {\bf
125} (1995), 1241--1275.

\bibitem{Sh}
J. Shi, {\em Saddle solutions of the balanced bistable diffusion
equation,} Comm. Pure. Appl. Math. {\bf 55} (2002), 815--830.

\bibitem{S}
J. Simons, {\em Minimal varieties in riemannian manifolds,} Ann.
Math. {\bf 88} (1968), 62--105.


\bibitem{Vi}
S. Villegas, {\it Asymptotic behavior of stable
radial solutions of semilinear elliptic equations in $\R^N$,}
to appear in J. Math. Pures Appl.



\end{thebibliography}
\end{document}